\newcommand{\la}{\langle}
\newcommand{\ra}{\rangle}
\newcommand{\x}{\times}
\newcommand{\too}{\longrightarrow}
\newcommand{\lrightarrow}{\longrightarrow}
\newcommand{\SA}{{\mathcal{A}}}
\newcommand{\SC}{{\mathcal{C}}}
\newcommand{\SM}{{\mathcal{M}}}
\newcommand{\ZZ}{\mathbb{Z}}
\newcommand{\RR}{\mathbb{R}}
\DeclareMathOperator{\Id}{Id}
\numberwithin{equation}{section}
\theoremstyle{plain}
\newtheorem{proposition}{Proposition}[section]
\newtheorem{theorem}[proposition]{Theorem}
\newtheorem{lemma}[proposition]{Lemma}
\theoremstyle{definition}
\newtheorem{definition}[proposition]{Definition}
\newtheorem{examples}[proposition]{Examples}
\theoremstyle{remark}
\newtheorem{remark}[proposition]{Remark}
\begin{document}

\title[On formality of Sasakian manifolds]{On
formality of Sasakian manifolds}

\author[I. Biswas]{Indranil Biswas}
\address{School of Mathematics, Tata Institute of Fundamental
Research, Homi Bhabha Road, Bombay 400005, India}
\email{indranil@math.tifr.res.in}

\author[M. Fern\'andez]{Marisa Fern\'{a}ndez}
\address{Universidad del Pa\'{\i}s Vasco,
Facultad de Ciencia y Tecnolog\'{\i}a, Departamento de Matem\'aticas,
Apartado 644, 48080 Bilbao, Spain}
\email{marisa.fernandez@ehu.es}

\author[V. Mu\~{n}oz]{Vicente Mu\~{n}oz}
\address{Facultad de Ciencias Matem\'aticas, Universidad
Complutense de Madrid, Plaza de Ciencias
3, 28040 Madrid, Spain}
\email{vicente.munoz@mat.ucm.es}

\author[A. Tralle]{Aleksy Tralle}
\address{Department of Mathematics and Computer Science, University of Warmia
and Mazury, S\l\/oneczna 54, 10-710, Olsztyn, Poland}
\email{tralle@matman.uwm.edu.pl}

\subjclass[2010]{53C25, 55S30, 55P62}

\keywords{Sasakian manifold, formality, Massey product.}

\begin{abstract}
We investigate some topological properties, in particular formality,
of compact Sasakian manifolds. Answering some
questions raised by Boyer and Galicki, we prove that all higher (than three) Massey 
products on any compact Sasakian manifold vanish. Hence, higher 
Massey products do obstruct Sasakian structures. Using this we produce a 
method of constructing simply connected $K$-contact non-Sasakian manifolds.

On the other hand, for every $n\, \geq\, 3$, we exhibit the first examples of simply
connected compact Sasakian manifolds of dimension $2n+1$ which 
are non-formal. They are non-formal because they have a non-zero triple Massey product.
We also prove that arithmetic lattices in some simple Lie groups cannot
be the fundamental group of a compact Sasakian manifold.
\end{abstract}

\maketitle

\section{Introduction}\label{sec:intro}

The present article deals with homotopical properties of Sasakian manifolds. Sasakian 
geometry has become an important and active subject, especially after the appearance of 
the fundamental treatise of Boyer and Galicki \cite{BG}. The Chapter 7 of this book 
contains an extended discussion on the topological problems in the theory of Sasakian, 
and, more generally, $K$-contact manifolds. In particular, there are several topological 
obstructions to the existence of the aforementioned structures on a compact manifold $M$ 
of dimension $2n+1$, for example:
\begin{itemize}
\item vanishing of the odd Stiefel-Whitney classes,

\item the inequality $1\,\leq\, cup(M)\,\leq\, 2n$ on the cup-length,

\item the evenness of the $p^{th}$ Betti number for $p$ odd with $1\, \leq\, p \, \leq\, n$,

\item the estimate on the number of closed integral curves of the Reeb vector field 
(there should be at least $n+1$),

\item some torsion obstructions in dimension 5 discovered by Koll\'ar.
\end{itemize}
Here we follow the above line of thinking. It is
well-known that the theory of
Sasakian manifolds is, in a sense, parallel to the theory of the K\"ahler manifolds. 
In fact, a Sasakian manifold is a 
Riemannian manifold $(M,g)$ 
such that $M\times {\mathbb{R}^+}$ equipped with the cone metric 
$h\,=\,t^2g+ {d}t^2$ is K\"ahler.
In particular, $M$ has odd dimension $2n+1$, where $n+1$ is the complex
dimension of the K\"ahler cone.
There is a deep theorem of Deligne, Griffiths, Morgan and Sullivan on the
rational homotopy type of K\"ahler manifolds \cite{DGMS}. In the same spirit, rational
homotopical properties of a manifold are related to the existence
of suitable geometric structures on the manifold \cite{FOT}.
Therefore, it is important to build a version of such theory for compact Sasakian
 manifolds. It seems that not much known in this direction, although some partial
results were obtained in \cite{Tievsky}.

In \cite[Chapter 7]{BG}, the authors pose the following problems.
\begin{enumerate}
\item Are there obstructions to the existence of Sasakian structures expressed in terms 
of Massey products?

\item There are obstructions to the existence of Sasakian structures expressed in terms 
of Massey products, which depend on basic cohomology classes of the related $K$-contact 
structure. Can one obtain a topological characterization of them?

\item Do there exist simply connected $K$-contact non-Sasakian manifolds (open problem 7.4.1)?

\item Which finitely presented groups can be realized as fundamental groups of compact 
Sasakian manifolds?
\end{enumerate}
The present paper deals with these problems. There are examples of non-simply connected
compact Sasakian manifolds which are non-formal because they have a non-zero triple Massey
product. Simply connected compact manifolds of dimension less than or equal to $6$ are
formal \cite{FM, N-Miller}. In Section \ref{non-formal-sasakian} we show that triple
Massey products do not obstruct Sasakian structures neither in the non-simply connected
case nor in the simply connected case. Indeed, in Theorem \ref{1-connected-sasak: non-formal}
we prove the following:

{\it For every $n\, \geq\, 3$ there exists a simply connected compact regular Sasakian 
manifold $M$, of dimension $2n+1$, with a non-zero triple Massey product}.

The example that we construct
is the total space of a non-trivial $3$-sphere bundle over 
$(S^2)^{n-1}=S^2 \times \stackrel{(n-1)}{\cdots} \times S^2$.

However, in Section \ref{k-cont-no-sasak}, we fully 
answer the question about Massey product obstructions
proving that {\it all higher order 
Massey products of Sasakian manifolds vanish} (Theorem 
\ref{prop:higher-massey} and Proposition \ref{prop:a-massey}). Using the 
latter we show a new method of constructing families of
compact $K$-contact manifolds with no Sasakian structures. More precisely, we have
the following result (Theorem \ref{thm:k-contact-ns}):

{\it
Let $M$ be a simply connected compact symplectic manifold of dimension $2k$
with an integral symplectic form $\omega$, and a non-zero 
quadruple Massey product. Then there exists a sphere bundle 
$S^{2m+1}\,\lrightarrow\, E\,\lrightarrow\, M$, 
with $m+1>k$, such that the total space $E$ is $K$-contact but has no Sasakian 
structure.}

Note that the existence of simply connected K-contact non-Sasakian manifolds was proved by Hajduk 
and the fourth author in \cite{HT} using the evenness of the third Betti number of a 
compact Sasakian manifold.

Finally, in Section \ref{fund-group}, we 
address the question $(4)$ on the fundamental groups of $K$-contact and 
Sasakian manifolds. We show that every finitely presented group occurs as 
the fundamental group of some compact $K$-contact manifold (Theorem 
\ref{thm:main}). In contrast, some arithmetic lattices in simple 
Lie groups cannot be realized as the fundamental group of some compact Sasakian 
manifold (Proposition \ref{prop:lattices1}).

\section{Formal manifolds and Massey products}\label{formal manifolds}

In this section some definitions and results about minimal models
and Massey products are reviewed \cite{DGMS, FHT}.

We work with {\em differential graded commutative algebras}, or DGAs,
over the field $\mathbb R$ of real numbers. The degree of an
element $a$ of a DGA is denoted by $|a|$. A DGA $(\mathcal{A},\,d)$ is {\it minimal\/}
if:
\begin{enumerate}
 \item $\mathcal{A}$ is free as an algebra, that is, $\mathcal{A}$ is the free
 algebra $\bigwedge V$ over a graded vector space $V\,=\,\bigoplus_i V^i$, and

 \item there is a collection of generators $\{a_\tau\}_{\tau\in I}$
indexed by some well ordered set $I$, such that
 $|a_\mu|\,\leq\, |a_\tau|$ if $\mu \,< \,\tau$ and each $d
 a_\tau$ is expressed in terms of preceding $a_\mu$, $\mu\,<\,\tau$.
 This implies that $da_\tau$ does not have a linear part.
 \end{enumerate}

In 
our context, the main example of DGA is the de Rham complex $(\Omega^*(M),\,d)$
of a differentiable manifold $M$, where $d$ is the exterior differential.

Given a differential graded commutative algebra $(\mathcal{A},\,d)$, we 
denote its cohomology by $H^*(\mathcal{A})$. The cohomology of a 
differential graded algebra $H^*(\mathcal{A})$ is naturally a DGA with the 
product structure inherited from that on $\mathcal{A}$ and with the differential
being identically zero. The DGA $(\mathcal{A},\,d)$ is {\it connected} if 
$H^0(\mathcal{A})\,=\,\RR$, and $\mathcal{A}$ is {\em $1$-connected\/} if, in 
addition, $H^1(\mathcal{A})\,=\,0$.

According to \cite[page 249]{DGMS}, an {\it elementary extension} 
of a differential graded commutative algebra $(\mathcal{A},\,d)$ is any 
DGA of the form 
$$(\mathcal{B}=\mathcal{A}\otimes\bigwedge V,\, d_{\mathcal{B}})\, ,$$ 
where 
$d_{\mathcal{B}}{|}_{\mathcal{A}}\,=\,d$, $d_{\mathcal{B}}(V)
\,\subset\,\mathcal{A}$ and $\bigwedge V$
is the free algebra over a finite dimensional vector space $V$ whose elements have all the same degree.

Morphisms between DGAs are required to preserve the degree and to commute 
with the differential. 

We shall say that $(\bigwedge V,\,d)$ is a {\it minimal model} of the
differential graded commutative algebra $(\mathcal{A},\,d)$ if $(\bigwedge V,\,d)$ is minimal and there
exists a morphism of differential graded algebras $$\rho\,\colon\,
{(\bigwedge V,\,d)}\longrightarrow {(\mathcal{A},\,d)}$$ inducing an isomorphism
$\rho^*\colon H^*(\bigwedge V)\stackrel{\sim}{\longrightarrow}
H^*(\mathcal{A})$ of cohomologies.

In~\cite{Halperin}, Halperin proved that any connected differential graded algebra
$(\mathcal{A},\,d)$ has a minimal model unique up to isomorphism. For $1$-connected
differential algebras, a similar result was proved earlier by Deligne, Griffiths,
Morgan and Sullivan~\cite{DGMS}.

A {\it minimal model\/} of a connected differentiable manifold $M$
is a minimal model $(\bigwedge V,\,d)$ for the de Rham complex
$(\Omega^*(M),\,d)$ of differential forms on $M$. If $M$ is a simply
connected manifold, then the dual of the real homotopy vector
space $\pi_i(M)\otimes \RR$ is isomorphic to $V^i$ for any $i$.
This relation also holds when $i\,>\,1$ and $M$ is nilpotent, that
is, the fundamental group $\pi_1(M)$ is nilpotent and its action
on $\pi_j(M)$ is nilpotent for all $j\,>\,1$ (see~\cite{DGMS}).

Recall that a minimal algebra $(\bigwedge V,\,d)$ is called
{\it formal} if there exists a
morphism of differential algebras $\psi\colon {(\bigwedge V,\,d)}\,\longrightarrow\,
(H^*(\bigwedge V),0)$ inducing the identity map on cohomology.
Also a differentiable manifold $M$ is called {\it formal\/} if its minimal model is
formal. Many examples of formal manifolds are known: spheres, projective
spaces, compact Lie groups, homogeneous spaces, flag manifolds,
and all compact K\"ahler manifolds.

The formality of a minimal algebra is characterized as follows.

\begin{proposition}[\cite{DGMS}]\label{prop:criterio1}
A minimal algebra $(\bigwedge V,\,d)$ is formal if and only if the space $V$
can be decomposed into a direct sum $V\,=\, C\oplus N$ with $d(C) \,=\, 0$,
and $d$ injective on $N$, such that every closed element in the ideal
$I(N)\, \subset\, \bigwedge V$ generated by $N$ is exact.
\end{proposition}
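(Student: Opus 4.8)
The statement is an equivalence, so the plan is to treat the two implications separately, dispatching the easier sufficiency first and isolating the genuine difficulty in the necessity. For the \emph{sufficiency} direction, suppose a splitting $V = C \oplus N$ as described is given. The plan is to write down the candidate formality morphism explicitly: define $\psi \colon (\bigwedge V, d) \to (H^*(\bigwedge V), 0)$ on generators by $\psi(c) = [c]$ for $c \in C$ (legitimate since $d(C) = 0$) and $\psi(n) = 0$ for $n \in N$, and extend it multiplicatively. By construction $\psi$ annihilates the ideal $I(N)$ and carries a product of elements of $C$ to the product of their classes. Two verifications then remain. First, $\psi$ is a chain map: since $\psi(dv) = 0$ on every generator $v$ (for $c \in C$ because $dc = 0$, and for $n \in N$ by splitting $dn = u + w$ with $u \in \bigwedge C$ and $w \in I(N)$, where both summands are closed because $\bigwedge C$ is $d$-closed, whence $w$ is exact by hypothesis and $u = dn - w$ is exact), the derivation property forces $\psi \circ d = 0$. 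Second, $\psi$ induces the identity on cohomology: for a closed $z$, write $z = u + w$ with $u \in \bigwedge C$ and $w \in I(N)$; then $w$ is closed, hence exact by hypothesis, so $[z] = [u]$ and $\psi(z) = [u] = [z]$. This exhibits formality.

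For the \emph{necessity} direction, suppose $(\bigwedge V, d)$ is formal with quasi-isomorphism $\psi \colon (\bigwedge V, d) \to (H^*(\bigwedge V), 0)$. Minimality forces the choice $C = V \cap \ker d$, and any vector-space complement $N$ then satisfies $d(C) = 0$ and $d$ injective on $N$; the content to be proved is that closed elements of $I(N)$ are exact. The reduction I would use is this: if the morphism can be arranged to vanish on $N$, then any closed $x \in I(N)$ satisfies $\psi(x) = 0$, because $\psi$ is an algebra map annihilating $N$, while at the same time $\psi(x) = [x]$, because $\psi$ induces the identity on cohomology and $x$ is closed; comparing the two gives $[x] = 0$, that is, $x$ is exact.

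The crux, and the step I expect to be the main obstacle, is therefore arranging $\psi(N) = 0$. One cannot simply redefine $\psi$ on $N$, since altering $\psi$ on a single generator changes $\psi \circ d$ on all later generators. The plan is to exploit minimality: order the generators as in the definition of a minimal DGA, so that each $dn$ involves only strictly preceding generators and carries no linear part, and then correct $\psi$ by composing with an automorphism of $\bigwedge V$ that subtracts from each non-closed generator a representative of its image class $\psi(n) \in H^*(\bigwedge V)$, processing the generators in order so that the correction at $n$ does not disturb the already-corrected lower stages. Making these representatives available at each stage so that the induction closes --- equivalently, recognizing that for a formal algebra the cohomology is generated by the classes of the closed generators, so that the images $\psi(N)$ can be absorbed --- is the technical heart of the argument and the point where formality is genuinely used; this is exactly the mechanism underlying the bigraded model of a formal minimal algebra.
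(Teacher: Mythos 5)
The paper itself offers no proof of Proposition~\ref{prop:criterio1}: it is imported verbatim from \cite{DGMS}. So your attempt has to be measured against the standard argument there. Your sufficiency direction matches it and is complete: the vector-space splitting $\bigwedge V=\bigwedge C\oplus I(N)$, the observation that every element of $\bigwedge C$ is closed (so that in $dn=u+w$ both summands are closed, $w$ is exact by hypothesis, and $u=dn-w$ is exact), and the verification that $c\mapsto[c]$, $n\mapsto 0$ defines a DGA morphism inducing the identity on cohomology are all correct. Your reduction of the necessity direction is also the right one: $C$ is forced to be $V\cap\ker d$, and once one has a quasi-isomorphism $\psi$ vanishing on $N$, any closed $x\in I(N)$ satisfies $[x]=\psi(x)=0$.

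The gap is exactly where you flag it, but it is worse than a missing verification: the mechanism you propose for arranging $\psi(N)=0$ fails as stated. If you precompose $\psi$ with the algebra map $\varphi$ determined by $\varphi(n)=n-z_n$, where $z_n$ is a closed element with $\psi(z_n)=\psi(n)$, then for $\varphi$ to be a morphism of DGAs you need $\varphi(dn)=d\varphi(n)=dn$; but $dn$ is a polynomial in strictly earlier generators, which have already been replaced by $m-z_m$ at previous stages, so $\varphi(dn)\neq dn$ in general. The lower-stage corrections are precisely what destroy compatibility with $d$ at stage $n$ --- the opposite of your claim that processing in order protects the earlier stages. Nor can you simply redefine $\psi$ to be $[c]$ on $C$ and $0$ on $N$ and check $\psi(dn)=0$ inductively: writing $dn=u+w$ with $u\in\bigwedge C$ and $w\in I(N)$, that check amounts to showing $u$ (equivalently $w$) is exact, which is the very statement you are trying to prove; the argument is circular. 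The honest route, which you name but do not execute, is to build the bigraded model $(\bigwedge Z,d)$ of $H^*(\bigwedge V)$ with $Z=\bigoplus_{k\ge0}Z_k$, $dZ_0=0$, $d$ injective on $Z_{\ge1}$, and structural quasi-isomorphism $\rho$ vanishing on $Z_{\ge1}$; formality and uniqueness of minimal models identify $(\bigwedge V,d)$ with this model, and taking $C$, $N$ to correspond to $Z_0$, $Z_{\ge1}$ closes the argument, since $\rho_*$ is injective and kills every closed element of $I(N)$. As written, your necessity half is a plan whose central step is both unproved and, in the specific form proposed, incorrect.
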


This characterization of formality can be weakened using the concept of
$s$-formality introduced in \cite{FM}.

\begin{definition}\label{def:primera}
A minimal algebra $(\bigwedge V,\,d)$ is $s$-formal
($s\,>\, 0$) if for each $i\,\leq\, s$
the space $V^i$ of generators of degree $i$ decomposes as a direct
sum $V^i\,=\,C^i\oplus N^i$, where the spaces $C^i$ and $N^i$ satisfy
the three following conditions:
\begin{enumerate}
\item $d(C^i) \,=\, 0$,

\item the differential map $d\,\colon\, N^i\,\longrightarrow\, \bigwedge V$ is
injective, and

\item any closed element in the ideal
$I_s\,=\,I(\bigoplus\limits_{i\leq s} N^i)$, generated by the space
$\bigoplus\limits_{i\leq s} N^i$ in the free algebra $\bigwedge
(\bigoplus\limits_{i\leq s} V^i)$, is exact in $\bigwedge V$.

\end{enumerate}
\end{definition}

A differentiable manifold $M$ is $s$-formal if its minimal model
is $s$-formal. Clearly, if $M$ is formal then $M$ is $s$-formal for all $s\,>\,0$.
The main result of \cite{FM} shows that sometimes the weaker
condition of $s$-formality implies formality.

\begin{theorem}[\cite{FM}]\label{fm2:criterio2}
Let $M$ be a connected and orientable compact differentiable
manifold of dimension $2n$ or $(2n-1)$. Then $M$ is formal if and
only if it is $(n-1)$-formal.
\end{theorem}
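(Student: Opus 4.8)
The plan is to prove the two implications separately, the forward one being immediate and the converse carrying all the content. If $M$ is formal then its minimal model $(\bigwedge V,\,d)$ admits a splitting $V=C\oplus N$ as in Proposition~\ref{prop:criterio1}; for any $s$ one has $I_s\subset I(N)$ inside $\bigwedge V$, so a closed element of $I_s$ is a closed element of $I(N)$ and hence exact, and the restricted decompositions $V^i=C^i\oplus N^i$ satisfy the three conditions of Definition~\ref{def:primera}. Thus $M$ is $s$-formal for every $s$, in particular $(n-1)$-formal.

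For the converse, assume $M$ is $(n-1)$-formal and let $m=\dim M\in\{2n-1,2n\}$. Fix the minimal model $\rho\colon(\bigwedge V,\,d)\to(\Omega^*(M),\,d)$ together with the decompositions $V^i=C^i\oplus N^i$ for $i\le n-1$ supplied by Definition~\ref{def:primera}. I would first extend these to all degrees: for $i\ge n$ set $C^i=\ker(d|_{V^i})$ and choose any complement $N^i$, so that conditions (1) and (2) of Proposition~\ref{prop:criterio1} hold automatically, and only condition (3) — that every closed element of the ideal $I(N)$, with $N=\bigoplus_i N^i$, be exact — remains to be verified. This reduces formality to a purely algebraic statement about $(\bigwedge V,\,d)$.

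Next I would organize the verification of condition (3) according to which generators occur. A closed element $a\in I(N)$ built only from generators in $V^{\le n-1}$ lies in the ideal $I_{n-1}$, hence is exact by the hypothesis of $(n-1)$-formality; note that $I_{n-1}$ already contains arbitrarily high-degree products of low-degree generators, so this disposes of far more than the low-degree classes. The genuinely new case is a closed $a\in I(N)$ involving at least one generator of degree $\ge n$, that is, of degree strictly more than half of $m$. Here compactness enters: I would invoke Poincar\'e duality for the oriented closed manifold $M$, so that $[a]=0$ in $H^*(M)\cong H^*(\bigwedge V)$ is equivalent to $\int_M a\wedge b=0$ for every closed $b$ of complementary degree $m-|a|$. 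Since $|a|\ge n$, we get $m-|a|\le n-1$ whenever $m=2n-1$, or $m=2n$ with $|a|>n$; writing $b=b_C+b_N$ with $b_C\in\bigwedge C$ and $b_N\in I(N)$ closed of degree $\le n-1$, the piece $b_N$ lies in $I_{n-1}$ and is therefore exact, so $b$ may be replaced by its $\bigwedge C$-representative and the pairing reduces to $\int_M a\wedge b_C$. A Leibniz/Stokes manipulation of the form $a\wedge d\xi=\pm\,d(a\wedge\xi)$ then discards the exact contributions, reducing the problem to showing that the resulting top-degree closed elements of $I(N)$ are exact, which I would settle by factoring off a lower-degree closed element of $I(N)$ and applying $(n-1)$-formality once more.

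The main obstacle is the middle degree in the even-dimensional case $m=2n$, where $|a|=n$ is self-dual under Poincar\'e duality: the complementary factor $b$ again has degree $n$, so its component $b_N\in I(N)$ sits in degree $n$ and is \emph{not} covered by the $(n-1)$-formality hypothesis. Resolving this is the crux, and I expect it to require the nondegeneracy of the middle cup-product pairing $H^n(M)\times H^n(M)\to H^{2n}(M)\cong\RR$ together with a careful analysis of the subspace of $H^n$ represented by closed elements of $I(N)$, rather than the low-degree hypothesis alone. I expect the remaining steps — the extension of the splitting, the degree bookkeeping, and the Stokes manipulations — to be routine, with this middle-dimensional pairing argument being where the real work lies.
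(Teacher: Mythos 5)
This theorem is not proved in the paper at all --- it is quoted from \cite{FM} --- so your attempt has to be measured against the argument in that reference. Your architecture does match it in outline: the easy implication, the extension of the splitting by $C^i=\ker(d|_{V^i})$ for $i\ge n$, the observation that a closed element of $I(N)$ built only from generators of degree $\le n-1$ already lies in $I_{n-1}$ and is killed by the hypothesis, and the use of Poincar\'e duality to convert the vanishing of $[a]$ for $|a|\ge n$ into the vanishing of top-degree pairings $\int_M\rho(a)\wedge\rho(b_C)$ with $b_C\in\bigwedge C^{\le n-1}$. All of that is correct and is how the proof in \cite{FM} begins.

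The gap is in the step you dispatch in one clause: ``reducing the problem to showing that the resulting top-degree closed elements of $I(N)$ are exact, which I would settle by factoring off a lower-degree closed element of $I(N)$ and applying $(n-1)$-formality once more.'' A top-degree element is exact precisely when its integral vanishes, so at this point you have reduced the theorem to the assertion that every closed $\xi\in I(N)$ of degree $m=\dim M$ satisfies $\int_M\rho(\xi)=0$; this is not obtainable by ``factoring,'' because $\xi$ is a sum of products $\nu\cdot\omega$ with $\nu\in N$ in which neither factor is closed, so there is no closed element of $I_{n-1}$ inside $a\wedge b_C$ to peel off and no way to quote condition (3) of Definition \ref{def:primera} again. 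This top-degree integral statement is exactly the technical heart of the proof in \cite{FM}, where it is isolated as a separate lemma and proved by a genuine algebraic argument using the closedness relations among the coefficients of the generators of $N^{\le n-1}$ and the injectivity of $d$ on $N$; nothing playing that role appears in your outline. The second gap you flag yourself: for $m=2n$ and $|a|=n$ the dual class also has degree $n$, its $I(N)$-component is an element of $N^n$ plus an element of $I_{n-1}$, and controlling the middle cup-product pairing on the subspace of $H^n$ carried by $I(N)$ reduces to the same unproved top-degree lemma applied to products of two such elements. So the proposal is a correct reduction that locates the difficulty accurately, but the difficulty itself is left unresolved, and the one mechanism you offer for it would fail.
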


One can check that any simply connected compact manifold is $2$-formal. Therefore, 
Theorem \ref{fm2:criterio2} implies that any simply connected compact manifold of 
dimension not more than $6$ is formal.

In order to detect non-formality, instead of computing the minimal
model, which usually is a lengthy process, one can use Massey
products, which are known to be obstructions to formality. The simplest type
of Massey product is the triple (also known as ordinary) Massey
product, which we define next.

Let $(\mathcal{A},\,d)$ be a DGA (in particular, it can be the de Rham complex
of differential forms on a differentiable manifold). Suppose that there are
cohomology classes $[a_i]\,\in\, H^{p_i}(\mathcal{A})$, $p_i\,>\,0$,
$1\,\leq\, i\,\leq\, 3$, such that $a_1\cdot a_2$ and $a_2\cdot a_3$ are
exact. Write $a_1\cdot a_2\,=\,da_{1,2}$ and $a_2\cdot a_3\,=\,da_{2,3}$.
The {\it (triple) Massey product} of the classes $[a_i]$ is defined to be
$$
\langle [a_1],[a_2],[a_3] \rangle \,=\, 
[ a_1 \cdot a_{2,3}+(-1)^{p_{1}+1} a_{1,2}
\cdot a_3]
$$
$$
\in \, \frac{H^{p_{1}+p_{2}+ p_{3} -1}(\mathcal{A})}{[a_1]\cdot 
H^{p_{2}+ p_{3} -1}(\mathcal{A})+[a_3]\cdot H^{p_{1}+ p_{2} -1}(\mathcal{A})}\, .
$$
 
Now we move on to the definition of higher Massey products
(see~\cite{TO}). Given $$[a_{i}]\,\in\, H^*(\mathcal{A})\, ,\ \
1\,\leq\,i\,\leq\, t\, , \ \ t\,\geq\, 3\, ,$$
the Massey product $\la [a_{1}],[a_{2}],\cdots,[a_{t}]\ra$, is defined
if there are elements $a_{i,j}$ on $\mathcal{A}$, with $1\,\leq\, i\,\leq\, j\,\leq\,
t$ and $(i,j)\,\not= \,(1,t)$, such that
 \begin{equation}\label{eqn:gm}
 \begin{aligned}
 a_{i,i}&\,=\, a_i\, ,\\
 d\,a_{i,j}&\,=\, \sum\limits_{k=i}^{j-1} (-1)^{|a_{i,k}|}a_{i,k}\cdot
 a_{k+1,j}\, .
 \end{aligned}
 \end{equation}
Then the {\it Massey product} $\la [a_{1}],[a_{2}],\cdots,[a_{t}] \ra$ is the set of cohomology classes
 $$
 \la [a_{1}],[a_{2}],\cdots,[a_{t}] \ra
$$
$$
=\, \left\{
 \left[\sum\limits_{k=1}^{t-1} (-1)^{|a_{1,k}|}a_{1,k} \cdot
 a_{k+1,t}\right] \ \mid \ a_{i,j} \mbox{ as in (\ref{eqn:gm})}\right\}
 \,\subset\, H^{|a_{1}|+ \ldots +|a_{t}|
 -(t-2)}(\mathcal{A})\, .
 $$
We say that the Massey product is {\it zero} if
$$0\,\in\, \la [a_{1}],[a_{2}],\cdots,[a_{t}] \ra\, .$$ 

It should be mentioned that for $\la
a_{1},a_{2},\cdots,a_{t}\ra$ to be defined, it is necessary that
all the lower order Massey products $\la a_{1},\cdots,a_{i}\ra$ and
$\la a_{i+1},\cdots,a_{t}\ra$ with $2 \,<\, i \,<\, t-2$ are defined and
trivial. 

Massey products are related to formality by the
following well-known result.

\begin{theorem}[\cite{DGMS,TO}] \label{theo:Massey products}
A DGA which has a non-zero Massey product is not formal.
\end{theorem}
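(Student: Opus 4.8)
The plan is to prove the contrapositive: if the DGA $(\mathcal{A},d)$ is formal, then every Massey product that is defined must contain $0$, hence be zero in the sense of the definition above. The whole argument rests on two elementary facts together with one genuinely homotopical transfer. The two facts are that Massey products are natural with respect to DGA morphisms, and that they are trivial in any DGA whose differential vanishes.

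First I would record the naturality (functoriality) of Massey products. Let $f\colon(\mathcal{A},d)\to(\mathcal{B},d)$ be a morphism of DGAs and suppose $\langle[a_1],\ldots,[a_t]\rangle$ is defined in $\mathcal{A}$, witnessed by a defining system $\{a_{i,j}\}$ as in (\ref{eqn:gm}). Since $f$ preserves products and commutes with $d$, the collection $\{f(a_{i,j})\}$ is again a defining system, now for $\langle f^*[a_1],\ldots,f^*[a_t]\rangle$ in $\mathcal{B}$, and $f$ carries each representative cocycle $\sum_k(-1)^{|a_{1,k}|}a_{1,k}\cdot a_{k+1,t}$ to the corresponding representative in $\mathcal{B}$. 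Hence $f^*\langle[a_1],\ldots,[a_t]\rangle\subseteq\langle f^*[a_1],\ldots,f^*[a_t]\rangle$; in particular, if $0$ lies in the left-hand set then $0$ lies in the right-hand set.

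Second I would check that in a DGA $(\mathcal{H},0)$ with vanishing differential every defined Massey product contains $0$. Indeed, if $\langle[a_1],\ldots,[a_t]\rangle$ is defined, the equations (\ref{eqn:gm}) with $d=0$ force the adjacent products $a_i\cdot a_{i+1}$ to vanish, and one then checks that the all-zero choice $a_{i,j}=0$ for $i<j$ is itself a legitimate defining system. For $t\geq 3$ its representative $\sum_k(-1)^{|a_{1,k}|}a_{1,k}\cdot a_{k+1,t}$ is identically $0$, since every surviving term would require both factors to be diagonal, which is impossible. Thus $0$ belongs to the Massey product.

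Finally I would combine these through the minimal model. If $\mathcal{A}$ is formal, its minimal model $(\bigwedge V,d)$ comes with a quasi-isomorphism $\rho\colon(\bigwedge V,d)\to(\mathcal{A},d)$ and a morphism $\psi\colon(\bigwedge V,d)\to(H^*(\bigwedge V),0)$ inducing the identity on cohomology. Given a Massey product defined in $\mathcal{A}$, I would transfer it across $\rho$ to the minimal model, then push it forward by $\psi$ into $(H^*(\bigwedge V),0)$, where by the second step it contains $0$; since $\psi^*$ is the identity and $\rho^*$ is an isomorphism, tracing the indeterminacy subspaces back shows that the original product in $\mathcal{A}$ also contains $0$. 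The main obstacle is exactly the transfer across $\rho$ in the direction \emph{from} $\mathcal{A}$ \emph{to} the minimal model: the naturality of the second paragraph only pushes defining systems forward along a morphism, so one must genuinely use that $\rho$ is a quasi-isomorphism to lift a defining system, built inductively on $j-i$ by repeatedly selecting cocycle representatives and primitives (available because $\rho$ is surjective on cohomology and injective on it), and to match the two indeterminacy subspaces so that the condition $0\in\langle\cdots\rangle$ is equivalent on both sides. With that lifting in hand, formality of $\mathcal{A}$ forces all defined Massey products to vanish, which is precisely the contrapositive of the assertion.
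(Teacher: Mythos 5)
The paper does not prove this statement: it is quoted as a known theorem with a citation to \cite{DGMS} and \cite{TO}, so there is no in-paper argument to compare against. Your proposal is the standard proof from those sources and is essentially correct: naturality of defining systems under DGA morphisms, triviality of all defined Massey products in a DGA with zero differential (your verification that the all-zero defining system works and that every term of the representative $\sum_k \pm a_{1,k}a_{k+1,t}$ dies for $t\geq 3$ is right), and transfer through the minimal model. You also correctly locate the real content in the transfer step: the one-directional inclusion $f^*\langle\cdots\rangle\subseteq\langle f^*\cdots\rangle$ from naturality is not enough, and one needs that a quasi-isomorphism induces an \emph{equality} of Massey product sets (in particular preserves definedness in both directions), proved by the inductive lifting of defining systems on $j-i$. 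One caution on wording: for $t\geq 4$ the Massey product set is in general not a coset of a subspace, so ``tracing the indeterminacy subspaces back'' is not literally available; what you should invoke instead is precisely the set-level identity $f^*\langle [a_1],\ldots,[a_t]\rangle=\langle f^*[a_1],\ldots,f^*[a_t]\rangle$ for quasi-isomorphisms $f$, applied once to $\psi$ (to pull $0$ from $(H^*(\bigwedge V),0)$ back into the minimal model, using injectivity of $\psi^*$) and once to $\rho$ (to push $0$ forward into $H^*(\mathcal{A})$). With that lemma stated and proved by your lifting argument, the proof is complete.
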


Another obstruction to the formality is given by the 
$a$-{\it Massey products} introduced in \cite{CFM}, that
generalize the triple Massey products. They have the advantage of
being simpler to compute than the higher order Massey
products. The $a$-Massey products are defined as follows.

Let $(\mathcal{A},\,d)$ be a DGA, and let
$a\, , b_1\, , \cdots\, , b_m \,\in\, \mathcal{A}$ be
closed elements such that the degree $|a|$ of $a$ is even and $a\cdot b_i$ is
exact for all $i$. Let $\xi_i$ be any element such that $d\xi_i \,=\, a
\cdot b_i$. Then, the {\it $m^{th}$ order $a$-Massey
product} of the $b_i$ is the subset
 $$
\la a; b_1,\cdots,b_m \ra
$$
$$
:=\, \left\{ \left[\sum_i (-1)^{|\xi_1|+\ldots + |\xi_{i-1}|}
{\xi_1}\cdot\ldots \cdot{\xi_{i-1}} \cdot b_i
\cdot \xi_{i+1}\cdot \ldots\cdot \xi_m\right] \, \mid\,
 d\xi_i \,=\, a\cdot b_i\right\}\,\subset\, H^{q}(\mathcal{A})\, ,
$$
where $q\,=\, (m-1)\deg a +(\sum_{i=1}^{m} \deg b_i)-m+1$.
We say that the $a$-Massey product is {\it zero} if $0\,\in\, \la
a; b_1,\cdots,b_n\ra$. 

\begin{lemma}[\cite{CFM}] \label{lem:cohomology a b}
The $a$-Massey product $\la a;b_1,\cdots,b_m\ra$ depends only on the
cohomology classes $[a]$, $\{[b_i]\}_{i=1}^m$, and not on the choices
of elements representing these cohomology classes.
\end{lemma}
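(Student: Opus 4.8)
The plan is to show that the cohomology class of the sum
$$\sum_i (-1)^{|\xi_1|+\ldots+|\xi_{i-1}|}\xi_1\cdots\xi_{i-1}\cdot b_i\cdot\xi_{i+1}\cdots\xi_m$$
is unchanged, modulo the indeterminacy subgroup, when we replace each representative $a$, $b_i$, $\xi_i$ by another admissible choice. First I would verify that the stated element is indeed closed, so that its class in $H^q(\mathcal{A})$ is defined: since $a$ has even degree and $d\xi_i=a\cdot b_i$, a direct computation using the Leibniz rule and the closedness of the $b_i$ shows that the contributions from differentiating the various $\xi_j$ telescope and cancel pairwise (the sign $(-1)^{|\xi_1|+\ldots+|\xi_{i-1}|}$ is precisely what makes adjacent terms cancel). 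This is a routine but sign-sensitive calculation that I would carry out carefully rather than cite.

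Next I would treat the two kinds of choices separately. The easier case is varying the primitives $\xi_i$: if $\xi_i'=\xi_i+\eta_i$ with $d\eta_i=0$ (so $d\xi_i'=a\cdot b_i$ still), then substituting into the sum and expanding, the difference between the two representatives is a sum of terms each containing a closed factor $\eta_i$; I would check that every such term is either exact or lies in the indeterminacy. The harder case is varying the closed representatives themselves: replacing $a$ by $a+d\alpha$, or $b_i$ by $b_i+d\beta_i$. Here one must simultaneously adjust the $\xi_i$ (since the defining equation $d\xi_i=a\cdot b_i$ changes), and then show the net effect on the cohomology class vanishes modulo indeterminacy.

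The main obstacle will be the bookkeeping in this last case: changing $a\mapsto a+d\alpha$ forces a new primitive $\xi_i$ for every $i$ at once, so the product $\xi_1\cdots\xi_{i-1}b_i\xi_{i+1}\cdots\xi_m$ is perturbed in every factor simultaneously, and one must organize the resulting first-order terms so that they reassemble into something manifestly exact together with terms of the form (closed)$\cdot$(the standard sum) that land in the indeterminacy. The cleanest way to manage this is to pick a single new primitive depending linearly on $\alpha$, $\beta_i$ (for instance, if $a\mapsto a+d\alpha$ one may take $\xi_i\mapsto\xi_i+(-1)^{|\alpha|}\alpha\cdot b_i$, which has the correct differential), substitute, and collect terms. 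Because any two admissible choices of the $\xi_i$ for fixed $a,b_i$ differ by closed elements, it suffices to verify invariance for these canonical primitives, after which the $\xi$-independence established above closes the argument.

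I would therefore structure the proof as three lemmas-within-the-proof: closedness of the representative, independence of the choice of $\xi_i$, and independence of the choice of closed representatives $a$ and $b_i$; the first two are short, and the third reduces to the first two once the canonical primitive is chosen. The only genuine risk is a sign error, so I would fix the convention $|\xi_i|=|a|+|b_i|-1$ at the outset and track signs through each Leibniz expansion explicitly.
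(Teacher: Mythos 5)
The paper does not prove this lemma: it is quoted from \cite{CFM}, where it is established by exactly the kind of direct computation you outline, so there is no in-paper argument to compare against. Your overall strategy --- check closedness of the representative, then compensate a change of $a$ or $b_i$ by a canonical correction of the primitives $\xi_i$ --- is the standard route and does work. But as written there are genuine gaps. The entire content of the lemma is the verification that, after substituting the corrected primitives, the first-order terms and the cross terms (those containing two or more correction factors) assemble into an exact element; you identify this as the main obstacle and then leave it undone, so what you have is a plan rather than a proof. Moreover your canonical primitive carries the wrong sign: since $b_i$ is closed, $d(\alpha\cdot b_i)=d\alpha\cdot b_i$, so the admissible correction for $a\mapsto a+d\alpha$ is $\xi_i\mapsto\xi_i+\alpha\cdot b_i$; your $\xi_i+(-1)^{|\alpha|}\alpha\cdot b_i=\xi_i-\alpha\cdot b_i$ (note $|\alpha|=|a|-1$ is odd) has differential $ab_i-d\alpha\cdot b_i\neq(a+d\alpha)\cdot b_i$. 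Given that you single out sign errors as the main risk, this one is not cosmetic.

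The more conceptual problem is your reliance on an ``indeterminacy subgroup.'' As defined in this paper (and in \cite{CFM}), $\la a;b_1,\cdots,b_m\ra$ is a \emph{subset} of $H^q(\mathcal{A})$ obtained by letting the $\xi_i$ range over all primitives of $a\cdot b_i$; it is not a coset, and for $m\geq 3$ it is not obviously one: replacing $\xi_1$ by $\xi_1+\eta_1$ with $\eta_1$ closed changes the representative by $\pm\,\eta_1\cdot(b_2\xi_3\cdots\xi_m\pm\cdots)$, which depends on the remaining $\xi_j$. Hence ``independence of the choice of $\xi_i$'' is not something to prove --- it is built into the definition --- and ``lies in the indeterminacy'' has no meaning here. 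What must actually be shown is that the correction $\xi_i\mapsto\xi_i+\alpha\cdot b_i$ (resp.\ $\xi_i\mapsto\xi_i+a\cdot\beta_i$) is a bijection between admissible primitive systems for the old and new data, and that for \emph{every} admissible system the two representatives are cohomologous; that single computation gives both inclusions of sets. Your proposed reduction to ``canonical primitives'' followed by an appeal to $\xi$-independence does not close the argument, precisely because the class does depend on the $\xi_i$.
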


\begin{theorem}[\cite{CFM}] \label{theo:amassey and formality}
A DGA which has a non-zero $a$-Massey product is not
formal.
\end{theorem}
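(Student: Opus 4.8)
The plan is to prove the contrapositive: if $(\mathcal{A},d)$ is formal, then every $a$-Massey product that is defined contains $0$, and is therefore zero. By Lemma~\ref{lem:cohomology a b} the $a$-Massey product depends only on the cohomology classes involved, so I may compute it in any DGA linked to $\mathcal{A}$ by cohomology isomorphisms. Concretely, let $\rho\colon(\bigwedge V,d)\to(\mathcal{A},d)$ be the minimal model, which exists by the minimal model theorem recalled above and induces an isomorphism $\rho^*$ on cohomology; and, since $\mathcal{A}$ is formal, let $\psi\colon(\bigwedge V,d)\to(H^*(\bigwedge V),0)$ be a morphism inducing the identity on cohomology, as in the definition of formality. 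I would run the argument in the minimal model $(\bigwedge V,d)$ and then transport the conclusion to $\mathcal{A}$ along $\rho$.

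The first key step is a naturality statement for $a$-Massey products: if $f\colon(\mathcal{B},d)\to(\mathcal{C},d)$ is a morphism of DGAs and $\la a;b_1,\cdots,b_m\ra$ is defined in $\mathcal{B}$ with defining family $\{\xi_i\}$, then $\{f(\xi_i)\}$ satisfies $d\,f(\xi_i)=f(a)\cdot f(b_i)$ and is hence a defining family in $\mathcal{C}$; since $f$ preserves degrees and products, the resulting Massey representative is exactly the image under $f$ of the original one. Therefore
$$f^*\,\la a;b_1,\cdots,b_m\ra\,\subset\,\la f(a);f(b_1),\cdots,f(b_m)\ra\, .$$
I would check here that the sign weights $(-1)^{|\xi_1|+\cdots+|\xi_{i-1}|}$ are preserved, as $|f(\xi_j)|=|\xi_j|$, and that the parity hypothesis passes from $|a|$ to $|f(a)|$.

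Next I would compute the $a$-Massey product in the target $(H^*(\bigwedge V),0)$, whose differential vanishes. Lifting the given classes along $\rho^*$, the products $a\cdot b_i$ remain exact, so $\psi(a)\cdot\psi(b_i)=\psi(a\cdot b_i)$ represents the zero class and hence \emph{equals} $0$ in $(H^*(\bigwedge V),0)$. One may thus take $\xi_i=0$ there; since there are at least two factors $b_i$ (i.e.\ $m\geq 2$, the range in which the product is a genuine obstruction), every summand of the representative contains some factor $\xi_j$ with $j\neq i$, so the representative is $0$ and $\la\psi(a);\psi(b_1),\cdots,\psi(b_m)\ra=\{0\}$. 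Combining this with the naturality inclusion for $\psi$ and the injectivity of $\psi^*$ forces the $a$-Massey product in $(\bigwedge V,d)$ to be exactly $\{0\}$ (it is nonempty, being defined). Pushing this forward along $\rho$ through the naturality inclusion for $\rho$ then gives $0\in\la a;b_1,\cdots,b_m\ra$ in $\mathcal{A}$, as required.

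The step I expect to be the main obstacle is the careful bookkeeping in the naturality statement: one only obtains the one-sided inclusions $f^*\la\cdots\ra\subset\la f(\cdots)\ra$, so the argument must be arranged so that these, together with the injectivity of $\psi^*$, suffice — it is essential that $\psi^*$ is an \emph{isomorphism}, not merely that $\psi$ is a morphism, in order to pass from ``$\psi^*$ annihilates every Massey representative'' to ``the product is $\{0\}$'' upstairs. An alternative, more self-contained route is to argue directly inside the minimal model via the formality criterion of Proposition~\ref{prop:criterio1}: writing $V=C\oplus N$, one would choose the representatives of the $b_i$ and the elements $\xi_i$ so that the Massey representative lies in the ideal $I(N)$ generated by $N$; being then a closed element of $I(N)$ it is exact, so its class is $0$. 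This parallels the proof of Theorem~\ref{theo:Massey products} for ordinary Massey products, and its delicate point is precisely the controlled choice of the $\xi_i$.
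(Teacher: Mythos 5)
The paper does not prove this statement; it is quoted from \cite{CFM} without proof, so there is no internal argument to compare against. Your proposal is correct as it stands. The contrapositive set-up, the naturality inclusion $f^*\la a;b_1,\cdots,b_m\ra\subset\la f(a);f(b_1),\cdots,f(b_m)\ra$ for a DGA morphism $f$, and the observation that in $(H^*(\bigwedge V),0)$ one may take $\xi_i=0$ because $\psi(a\cdot b_i)=\psi(d\xi_i)=0$ on the nose, all check out; and you are right that the weak point is the one-sidedness of naturality, which you circumvent correctly by using the injectivity of $\psi^*$ to force the product upstairs to be exactly $\{0\}$ and then pushing $0$ forward along $\rho$ (invoking Lemma~\ref{lem:cohomology a b} to identify $\la\rho(\alpha);\rho(\beta_1),\cdots,\rho(\beta_m)\ra$ with the original product). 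Two small points worth making explicit: the product in $(\bigwedge V,d)$ is indeed defined, because $\rho^*$ being an isomorphism makes $\alpha\cdot\beta_i$ exact whenever $a\cdot b_i$ is; and the vanishing of each summand $\xi_1\cdots\xi_{i-1}\cdot b_i\cdot\xi_{i+1}\cdots\xi_m$ requires $m\geq 2$, which is automatic for the $a$-Massey products considered here. Your sketched alternative — representing the classes in $\bigwedge C$ and choosing the $\xi_i$ in $I(N)$ so that the representative is a closed element of $I(N)$, hence exact by Proposition~\ref{prop:criterio1} — is in fact closer to the argument actually given in \cite{CFM}, and it mirrors the paper's own strategy for Propositions~\ref{prop:Leschetz-higher-massey} and~\ref{prop:Leschetz-a-massey}, where vanishing is likewise obtained by exhibiting one judicious defining family all of whose product terms vanish. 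Either route is sound; the naturality route is more portable (it needs no decomposition of $V$), while the $I(N)$ route stays entirely inside the minimal model.
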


The concept of formality is also defined for nilpotent CW-complexes, and all the above 
discussions can be extended to the nilpotent CW-complexes by using the DGA of piecewise 
polynomial differential forms ${\mathcal{A}_{PL}}(X)$ on a CW-complex $X$ (instead of 
using differential forms) \cite{FHT, GM}.

\section{Non-formal simply connected regular Sasakian manifolds}\label{non-formal-sasakian}

In this section we produce examples of simply connected compact
regular Sasakian manifolds, of dimension $\geq 7$, which are non-formal.
As mentioned before, Theorem \ref{fm2:criterio2}
gives that simply connected compact manifolds of dimension at most $6$
are all formal \cite{FM, N-Miller}.

First, we recall some definitions and results on Sasakian manifolds (see
\cite{BG} for more details).

Let $M$ be a $(2n + 1)$-dimensional manifold. An
{\em almost contact metric structure} on $M$ consists of
a quadruplet $(\eta\, , \xi\, , \phi\, ,g)$, where $\eta$ is a differential
$1$-form, $\xi$ is a nowhere vanishing vector field 
(known as the {\em Reeb} vector field), $\phi$ is a
$\SC^\infty$ section of ${\mathrm End}(TM)$ and $g$ is a Riemannian metric on $M$,
satisfying the following conditions
\begin{equation}\label{almostcontactmetric}
\eta(\xi) \,=\, 1, \quad \phi^2\,=\, - \Id + \xi \otimes \eta, \quad
g (\phi X\, , \phi Y)\,= \,g(X\, , Y) - \eta(X) \eta(Y)\, ,
\end{equation}
for all vector fields $X, Y$ on $M$. 
Thus, the kernel of $\eta$ defines a
codimension one distribution ${\mathcal D} \,=\, \ker (\eta)$, and there is
the orthogonal decomposition of the tangent bundle $TM$ of $M$
$$
TM = {\mathcal D} \oplus {\mathcal L}\, ,
$$
where ${\mathcal L}$ is the trivial line subbundle of $TM$ generated by $\xi$. Note
that conditions in \eqref{almostcontactmetric} imply that
\begin{equation} \label{alphaphi}
\phi(\xi)\,=\, 0\, ,\quad \eta\circ\phi\,=\,0\,.
\end{equation}

For an almost contact metric structure $(\eta\, , \xi\, , \phi\, ,g)$ on $M$, 
the fundamental 2-form $F$ on $M$ is defined by
$$
F (X, Y ) \,=\, g(\phi X, Y )\, ,
$$
where $X$ and $Y$ are vector fields on $M$. Hence, 
$$F(\phi X, \phi Y )\,=\,F(X, Y)\, ,$$ that is $F$ is compatible with
$\phi$, and $\eta\wedge F^n\not=\,0$ everywhere. 

An almost contact metric structure $(\eta\, , \xi\, , \phi\, ,g)$ on $M$ is said to
be a {\em contact metric} if
$$
g (\phi X, Y) \,=\, {d} \eta (X, Y)\,.
$$
In this case $\eta$ is a {\em contact form}, meaning
$$\eta\wedge ({d} \eta)^n\not=\,0$$ at every point of $M$. 
If $(\eta\, , \xi\, , \phi\, ,g)$ is a contact metric structure 
such that $\xi$ is a Killing vector field for $g$, meaning 
${\mathcal{L}}_{\xi} g\,=\,0$, where ${\mathcal{L}}_{\xi}$ denotes the Lie derivative, 
then $(\eta\, , \xi\, , \phi\, ,g)$ is called a
{\it K-contact} structure. A manifold with a K-contact structure is called a
{\it K-contact manifold}.

Just as in the case of an almost Hermitian structure, there is the notion of
integrability of an almost contact metric structure. More precisely, an almost contact
metric structure $(\eta\, , \xi\, , \phi\, , g)$ is called \emph{normal} if the Nijenhuis
tensor $N_{\phi}$ associated to the tensor field $\phi$, defined by
\begin{equation}\label{NPhi}
N_{\phi} (X, Y) \,:=\, {\phi}^2 [X, Y] + [\phi X, \phi Y] - 
\phi [ \phi X, Y] - \phi [X, \phi Y]\, ,
\end{equation}
satisfies the equation
$$
N_{\phi} \,=\,-{d}\eta\otimes \xi\, .
$$
This last equation is equivalent to the condition that the almost complex
structure $J$ on $M \times {\mathbb{R}}$ given by
\begin{equation} \label{complexproduct}
J \left( X,\, f\frac{\partial}{\partial t} \right)
\,=\, \left(\phi X - f \xi,\, \eta (X) \frac {\partial} {\partial t} \right)
\end{equation}
is integrable, where $f$ is a smooth function on $M \times {\mathbb{R}}$ and 
$t$ is the standard coordinate on ${\mathbb{R}}$ (see \cite{SH}). In other words, $\phi$
defines a complex structure on the kernel $\ker (\eta)$ compatible with $d\eta$.

A {\it Sasakian structure} is a normal contact metric structure, in other words, an 
almost contact metric structure $(\eta\, , \xi\, , \phi\, , g)$ such that
$$
N_ {\Phi} \,=\, -{d} \eta \otimes \xi\, , \quad {d} \eta \,=\, F\, .
$$
If $(\eta\, , \xi\, , \phi\, ,g)$ is a Sasakian structure on $M$, 
then $(M\, ,\eta\, , \xi\, , \phi\, ,g)$ is called a {\em Sasakian manifold}.

Riemannian manifolds with a Sasakian structure can also 
be characterized in terms of the Riemannian cone
over the manifold. A Riemannian manifold $(M,\, g)$ admits 
a compatible Sasakian structure if and only if $M\times {\mathbb{R}^+}$ 
equipped with the cone metric $h\,=\,t^2g+{d}t\otimes{d}t$ is K\"ahler \cite{BG}. 
Furthermore, in this case the Reeb vector field is Killing, and the covariant 
derivative of $\phi$ with respect to the Levi-Civita connection of $g$ is 
given by
\[ (\nabla_X \phi)(Y)\,=\,g(\xi, Y)X-g(X,Y)\xi\, , \]
where $X$ and $Y$ are vector fields on $M$.

In the study of Sasakian manifolds, the {\em basic cohomology} plays an important role. 
Let $(M,\eta\,, \xi\,, \phi\,,g)$ be a Sasakian manifold of dimension
$2n+1$. A differential form $\alpha$ on $M$ is called {\em basic} if
$$
\iota_{\xi}\alpha\,=\,0\,=\,\iota_{\xi}d\alpha\, ,
$$
where $\iota_{\xi}$ denotes the contraction of differential forms by $\xi$. We denote by 
$\Omega_{B}^{k}(M)$ the space of all basic $k$-forms on $M$. Clearly, the de Rham exterior 
differential $d$ of $M$ takes basic forms to basic forms. Denote by $d_B$ the restriction
of the de Rham differential $d$ to $\Omega_{B}^{*}(M)$. The cohomology of the 
differential complex $(\Omega_{B}^{*}(M),\, d_B)$ is called the {\em basic cohomology}.
These basic cohomology groups are denoted by $H_{B}^{*}(M)$. When $M$ is compact, the dimensions of 
the de Rham cohomology groups $H^{*}(M)$ and the basic cohomology groups $H_{B}^{*}(M)$
are related as follows:

\begin{theorem}[{\cite[Theorem 7.4.14]{BG}}]\label{rel:betti-basic-numbers}
Let $(M,\eta\, , \xi\, , \phi\, ,g)$ be a compact 
Sasakian manifold of dimension $2n+1$. Then, the Betti number $b_{r}(M)$ and the basic 
Betti number $b_{r}^{B}(M)$ are related by
$$
b_{r}^{B}(M)\, =\, b_{r-2}^{B}(M)+b_{r}(M)\, ,
$$
for $0\,\leq\, r\,\leq\, n$, In particular, if $r$ is odd
and $r\,\leq\, n$, then $b_{r}(M)\,=\,b_{r}^{B}(M)$.
\end{theorem}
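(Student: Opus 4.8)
The plan is to establish the recursion $b_r^B(M) = b_{r-2}^B(M) + b_r(M)$ for $0 \leq r \leq n$ by exploiting the Hard Lefschetz-type structure that the basic cohomology of a compact Sasakian manifold carries. The key structural fact is that on a compact Sasakian manifold the basic cohomology $H_B^*(M)$ is a finite-dimensional graded algebra satisfying a transverse Hodge theory: the basic form $d\eta = F$ is a closed basic $2$-form of ``transverse K\"ahler'' type, and cup product with its class $[d\eta] \in H_B^2(M)$ plays the role of the Lefschetz operator on a K\"ahler manifold of complex dimension $n$. First I would recall (from the transverse K\"ahler package in \cite{BG}, or the basic Hodge theory of the characteristic foliation) that the Lefschetz map
$$
L \,=\, \smile [d\eta] \,\colon\, H_B^{r}(M) \,\longrightarrow\, H_B^{r+2}(M)
$$
is \emph{injective} for all $r$ with $0 \leq r \leq n-1$. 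This injectivity in the range below the middle dimension is exactly the input needed for the additive recursion.

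Next I would relate the de Rham cohomology $H^*(M)$ to the basic cohomology via the Gysin-type long exact sequence associated to the Reeb foliation. Contraction $\iota_\xi$ and the factorization $\eta \wedge (-)$ fit the ordinary de Rham complex into an exact sequence coupling $\Omega_B^*(M)$ with itself, and the resulting long exact sequence in cohomology reads
$$
\cdots \,\longrightarrow\, H_B^{r}(M) \,\stackrel{L}{\longrightarrow}\, H_B^{r+2}(M) \,\longrightarrow\, H^{r+2}(M) \,\longrightarrow\, H_B^{r+1}(M) \,\stackrel{L}{\longrightarrow}\, H_B^{r+3}(M) \,\longrightarrow\, \cdots\, ,
$$
where the connecting map is cup product with $[d\eta]$. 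The plan is then to truncate this sequence in the range $r \leq n$, where the Lefschetz maps $L$ are injective. Injectivity of $L$ forces each connecting homomorphism to be injective, so the long exact sequence splits into \emph{short} exact sequences
$$
0 \,\longrightarrow\, H_B^{r-2}(M) \,\stackrel{L}{\longrightarrow}\, H_B^{r}(M) \,\longrightarrow\, H^{r}(M) \,\longrightarrow\, 0\, ,
$$
valid for $0 \leq r \leq n$. Taking dimensions immediately yields $b_r^B(M) = b_{r-2}^B(M) + b_r(M)$, which is the asserted recursion.

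The final step is the ``in particular'' clause. Iterating the recursion for odd $r \leq n$ gives $b_r^B(M) = b_r(M) + b_{r-2}^B(M) = \cdots$, and the chain terminates at $b_{-1}^B(M) = 0$ (there are no basic forms of negative degree). Since $r$ is odd, the descending indices $r, r-2, r-4, \ldots$ are all odd, so the chain bottoms out at degree $-1$ rather than at degree $0$; hence all the intermediate contributions beyond $b_r(M)$ itself vanish, giving $b_r^B(M) = b_r(M)$ for odd $r \leq n$.

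The main obstacle I anticipate is justifying the two cohomological inputs rigorously in the Sasakian (foliated) setting rather than quoting the K\"ahler analogues. Specifically, establishing the basic Hard Lefschetz injectivity requires the transverse Hodge theory for the characteristic foliation of a compact Sasakian manifold, including that the basic Laplacian has finite-dimensional kernel and that the transverse K\"ahler identities hold; and constructing the Gysin long exact sequence requires checking that the short complex built from $\iota_\xi$ and $\eta\wedge(-)$ is genuinely exact at the level of basic forms, with the connecting map correctly identified as $\smile[d\eta]$. Both are standard consequences of the transverse K\"ahler structure but must be invoked carefully; everything downstream is then linear algebra on the resulting short exact sequences.
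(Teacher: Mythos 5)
The paper itself gives no proof of this statement; it is quoted from \cite[Theorem~7.4.14]{BG}. Your argument for the main recursion $b_r^B(M)=b_{r-2}^B(M)+b_r(M)$, $0\le r\le n$, is correct and is essentially the proof in that reference: the long exact sequence coupling $H_B^*(M)$ with $H^*(M)$ whose connecting map is cup product with $[d\eta]_B$, together with El Kacimi-Alaoui's transverse hard Lefschetz theorem, which gives injectivity of $L=\smile[d\eta]_B\colon H_B^{r-1}\to H_B^{r+1}$ for $r\le n$ and hence splits the sequence into the short exact sequences $0\to H_B^{r-2}\to H_B^{r}\to H^{r}(M)\to 0$. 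No complaints there.

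Your derivation of the ``in particular'' clause, however, is wrong. Iterating the recursion for odd $r$ gives
$$
b_r^B(M)\,=\,b_r(M)+b_{r-2}^B(M)\,=\,b_r(M)+b_{r-2}(M)+b_{r-4}^B(M)\,=\,\cdots\,=\,b_r(M)+b_{r-2}(M)+\cdots+b_1(M)\, ,
$$
the chain indeed terminating at $b_{-1}^B(M)=0$. The ``intermediate contributions'' $b_{r-2}(M),\ldots,b_1(M)$ are ordinary de Rham Betti numbers, not basic ones, and there is no reason for them to vanish; your claim that they do is the gap. In fact the clause as stated is only valid for $r=1$ (where it reads $b_1^B=b_{-1}^B+b_1=b_1$): for the $7$-dimensional Heisenberg nilmanifold $\Gamma\backslash H^7$ one has $H_B^*\cong H^*(T^6)$, so $b_3^B=20$ while $b_3=b_3^B-b_1^B=20-6=14$, contradicting $b_3=b_3^B$. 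The consequence actually drawn in \cite{BG} (and used in the introduction of this paper) is that $b_r(M)$ is \emph{even} for odd $r\le n$, which follows from the recursion because the odd basic Betti numbers are even by transverse Hodge theory, so $b_r=b_r^B-b_{r-2}^B$ is a difference of even numbers. If you want to salvage the final step, that evenness statement is what you should prove, and it requires the additional input of the transverse Hodge decomposition on $H_B^{\mathrm{odd}}$, not just the recursion.
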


A Sasakian structure on $M$ is called \textit{quasi-regular} if there is a positive
integer $\delta$ satisfying the condition
that each point of $M$ has a foliated coordinate chart
$(U\, ,t)$ with respect to $\xi$ (the coordinate $t$ is in the direction of $\xi$)
such that each leaf for $\xi$ passes through $U$ at most $\delta$ times. If
$\delta\,=\, 1$, then the Sasakian structure is called \textit{regular}. (See
\cite[p. 188]{BG}.)

A result proved in \cite{OV} says that if $M$ admits a Sasakian structure, then it 
also admits a quasi-regular Sasakian structure. It should be clarified that the word 
``fibration'' in \cite[Theorem 1.11]{OV} means ``orbi-bundle''.

If $N$ is a compact K\"ahler manifold whose K\"ahler form $\omega$ 
defines an integral cohomology class, then the total space of the circle bundle 
\begin{equation}\label{epi}
S^1 \,\hookrightarrow\, M \,\stackrel{\pi}{\longrightarrow}\, N
\end{equation}
with Euler class $[\omega]\,\in\, H^2(M,\,\mathbb{Z})$ is a regular Sasakian manifold with 
a contact form $\eta$ that satisfies the equation $d \eta \,=\, \pi^*(\omega)$, where 
$\pi$ is the projection in \eqref{epi}.

\subsection{A non-simply connected non-formal Sasakian manifold}

Recall from \cite{CFL} that the real Heisenberg group
$H^{2n+1}$ admits a homogeneous regular Sasakian structure with its standard 1-form
$\eta\,=\, dz-\sum_{i=1}^{n} y_{i}dx_{i}$. As a manifold
$H^{2n+1}$ is just ${\mathbb{R}^{2n+1}}$ which can be
realized in terms of $(n+2)\x (n+2)$ nilpotent matrices of the form
\begin{equation}\label{Heisenbergmatrix}
A= \left(
\begin{matrix}1 &a_1 &\cdots &a_n& c \\
0 &1 &0 &\cdots & b_1\\
\vdots && \ddots & \cdots & \vdots \\
0 & \cdots & 0&1 & b_n \\
0 &\cdots &0 & 0& 1
\end{matrix}
\right),
\end{equation}
where $a_i, b_i, c\,\in\, \RR$, $i\,=\,1\, ,\cdots\, , n$. Then a global system of
coordinates ${x_i\, , y_i\, , z}$ for $H^{2n+1}$ is defined by
$x_{i}(A)\,=\,a_{i}$, $y_{i}(A)\,=\,b_{i}$, $z(A)\,=\,c$. A standard
calculation shows that we have a basis for the left invariant $1$-forms
on $H^{2n+1}$ which consists of
$$
\{dx_{i}\, ,\, dy_{i}\, ,\, dz-\sum_{i=1}^{n} x_{i}dy_{i}\}\, .
$$

Consider the discrete subgroup $\Gamma$ of $H^{2n+1}$ defined
by the matrices of the form given in \eqref{Heisenbergmatrix} with integer entries.
The quotient manifold
 $$
 M\,:=\,\Gamma\backslash H^{2n+1}
 $$ 
is compact. The $1$-forms $dx_i$,
$dy_i$ and $dz-\sum_{i=1}^{n} x_{i}dy_{i}$ descend to $1$-forms $\alpha_i$, $\beta_i$
and $\gamma$ respectively on $M$. We note that
$\{\alpha_i\, , \beta_i\, , \gamma\}$ is a basis for the $1$-forms on $M$. 
Let $\{X_i\, , Y_i\, , Z\}$ be the basis of vector fields on $M$ that is dual to
the basis $\{\alpha_i\, , \beta_i\, , \gamma\}$.
Define the almost contact metric structure 
$(\eta\, , \xi\, , \phi\, , g)$ on $M$ by
 $$
 \eta\,=\, \gamma\, , \quad \xi\,=\,Z\, , \quad \phi(X_i)\,=\,Y_i\, ,\quad
\phi(Y_i)\,=\,-X_i
$$
$$
\phi( \xi)\,=\,0\, , \quad
 g\,=\,\gamma^2 + \sum_{i=1}^{n} ((\alpha_i)^2 + (\beta_i)^2)\, .
$$
Then one can check that $(\eta\,, \xi\,, \phi\, , g)$ is a regular Sasakian structure on $M$.
In fact, the manifold $M$ can be also defined as a
circle bundle over a torus $T^{2n}$. Moreover, $M$ is non-formal since it is
not $1$-formal in the sense of Definition \ref{def:primera}.

\subsection{A non-simply connected formal Sasakian manifold}

In order to construct an example of a formal compact non-simply connected
regular Sasakian manifold, we consider
the simply connected, solvable non-nilpotent Lie group $L^3$ of dimension $3$ consisting
of matrices of the form
\begin{equation}\label{Kahlersolvable}
A= \left(
\begin{matrix}\cos 2\pi c &\sin 2\pi c &0 &a \\
-\sin 2\pi c&\cos 2\pi c &0 &b\\
0 & 0&1 &c \\
0 &0 & 0& 1
\end{matrix}
\right)
\end{equation}
where $a, b, c\,\in\, {\mathbb{R}}$. Then a global system of
coordinates ${x\, , y\, , z}$ for $L^3$ is defined by
$x(A)\,=\,a$, $y(A)\,=\,b$, $z(A)\,=\,c$, and a standard
calculation shows that a basis for the right invariant $1$-forms
on $L^3$ consists of
$$
\{dx-2\pi y\,dz\, ,\, dy + 2\pi x \, dz\, ,\, dz\}\, .
$$

Notice that the solvable Lie group $L^3$ is not completely solvable.
Let $D$ be a discrete subgroup of $L^3$ such that the quotient space $L^3/ D$ is compact
(such a subgroup $D$ exists; see for example \cite{AGH} or \cite{TO}). The forms
$dx-2\pi y dz$, $dy + 2\pi x dz$ and $dz$ descend to 1-forms $\alpha$, $\beta$ and
$\gamma$ respectively on $L^3/ D$.

We define the product manifold $B^4\,=\,(L^3/ D) \times S^1$. 
Then, there are $1$-forms ${\alpha, \beta, \gamma,
\mu}$ on $B^4$ such that
$$
 d\alpha\,=\,-\beta\wedge\gamma, \quad d\beta\,=\,\alpha\wedge\gamma, \quad
d\gamma\,=\,d\mu\, =\,0\, ,
$$
and at each point of $B^4$, the cotangent vectors $\{\alpha\, , \beta\, , \gamma\, ,
\mu\}$ form a basis for the cotangent fiber. Moreover, 
using Hattori's theorem \cite{Hattori} it is straightforward to compute the real
cohomology groups of $B^4$, which are:
\begin{eqnarray*}
 H^0(B^4) &=& \la 1\ra, \\
 H^1(B^4) &=& \la [\gamma], [\mu]\ra,\\
 H^2(B^4) &=& \la [\alpha \wedge \beta], [\gamma\wedge \mu]\ra,\\
 H^3(B^4) &=& \la [\alpha \wedge \beta \wedge \gamma],
 [\alpha \wedge \beta \wedge \mu]\ra,\\
 H^4(B^4) &=& \la [\alpha \wedge \beta \wedge \gamma \wedge \mu]\ra.
\end{eqnarray*}

A K\"ahler structure $(g,\,\omega)$ on $B^4$ is given by the K\"ahler metric
$g\,=\,\alpha^2 +\beta^2 + \gamma^2 + \mu^2$ and the K\"ahler form
$$\omega\,=\, \alpha \wedge \beta + \gamma\wedge \mu\, .$$
According to \cite{Hasegawa}, the complex manifold $B^4$ is a finite quotient
of a compact complex torus.

We can suppose that the K\"ahler form $\omega$ on $B^4$ defines an integral cohomology class.
Therefore, the total space $M^5$ of the circle bundle over $B^4$ with Euler class
$[\omega]$ has a regular Sasakian structure. 

Clearly, $B^4$ is formal as it is a compact K\"ahler manifold.
In order to prove that $M^5$ is formal, we first observe that 
the minimal model of $B^4$ must be a differential graded algebra of the form
$({\SM},\,d)$, where ${\SM}\,=\,\bigwedge(a_1, a_2, b, c)$
is a free algebra such that the generators $a_i$ have
degree $1$, the generator $b$ has degree $2$
and the generator $c$ has degree $3$. The differential $d$ is given
by $da_i\,=\,db\,=\,0$ and $dc\,=\,b^2$.
A homomorphism $$\rho\,\colon\, {\SM} \,\longrightarrow\, \Omega(B^4)$$ that induces
an isomorphism on cohomology is defined by $\rho(a_1)\,=\,\gamma$,
$\rho(a_2)\,=\,\mu$, $\rho(b)\,=\,\alpha \wedge \beta$,
and $\rho(c)\,=\,0$.

Now, according to \cite{RS}, a (non-minimal) model of
$$S^1 \,\hookrightarrow \,M^5 \,\longrightarrow \,B^4$$ with Euler class
$[\omega]\,\in\, H^2(B^4,\,\mathbb{Z})$
is given by $\bigwedge(a_1, a_2, b, c) \otimes \bigwedge(a_3)$, where
$\bigwedge(a_1, a_2, b, c)$ is the above minimal model for $B^4$,
the generator $a_3$ has degree $1$ and its differential 
is given by $da_3\,=\,a_1 a_2 + b$. Then, the minimal model associated to this model
is $$({\widetilde \SM}\, , \,\widetilde d)\,=\,(\bigwedge(a_1, a_2, x)\, ,\,
\widetilde d)\, ,$$
where the generators $a_i$ have degree $1$ and the generator $x$ has degree
3 while the differential $\widetilde d$ is given
by $\widetilde da_i\,=\,\widetilde dx\,=\,0$. 
Therefore, we get $$C^1\,=\,\la a_1\, , a_2\ra\, ,\ \
N^1\,=\,0\,=\,C^2\,=\,N^2\, .$$ Now
Theorem \ref{fm2:criterio2} implies that $M^5$ is formal because it is $2$-formal.

\subsection{A simply connected non-formal Sasakian manifold}

The most basic example of a simply connected compact regular Sasakian manifold
is the odd-dimensional sphere $S^{2n+1}$ considered as the total space of the Hopf fibration
$S^{2n+1} \,\hookrightarrow \, {\mathbb{CP}^n}$. It is well-known that $S^{2n+1}$ is formal.
In the next theorem we give examples of non-formal simply
connected compact regular Sasakian manifolds.

\begin{theorem}\label{1-connected-sasak: non-formal}
For every $n\,\geq\, 3$, there exists a simply connected compact regular Sasakian manifold 
$M^{2n+1}$, of dimension $2n+1$, which is non-formal. More precisely, there is a 
non-trivial $3$-sphere bundle over $(S^2)^{n-1}$ which is a non-formal simply connected 
compact regular Sasakian manifold.
\end{theorem}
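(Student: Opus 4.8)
The plan is to realize $M^{2n+1}$ at once as a circle bundle over a compact Kähler manifold—so that the construction recalled around \eqref{epi} endows it with a regular Sasakian structure—and as a non-trivial $3$-sphere bundle over $(S^2)^{n-1}$.

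\emph{Construction.} Put $X=(\CP^1)^{n-1}$ and let $x_1,\dots,x_{n-1}$ be the positive generators of $H^2(X)$, so $x_i^2=0$. Choose a holomorphic rank-$2$ bundle $E\lrightarrow X$ and set $N=\mathbb P(E)$; this is a $\CP^1$-bundle over $X$, hence a projective (so Kähler) manifold of complex dimension $n$, with $H^*(N)=H^*(X)[\xi]/(\xi^2-c_1(E)\xi+c_2(E))$. Fix an ample line bundle $A$ on $X$ and take the integral Kähler class $[\omega]=c_1(\mathcal O_{\mathbb P(E)}(1))+p^*c_1(A)$, which restricts to degree $1$ on each fibre $\CP^1$. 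Let $M\lrightarrow N$ be the circle bundle with Euler class $[\omega]$; by the result recalled after \eqref{epi} it is regular Sasakian with $d\eta=\pi^*\omega$. Composing $M\to N\xrightarrow{p}X$, the fibre of $M\to X$ is the degree-$1$ circle bundle over $S^2$, namely $S^3$, so $M$ is an $S^3$-bundle over $(S^2)^{n-1}$ with Euler class $e=c_2(E\otimes A)\in H^4(X)$. The homotopy sequence of $S^3\to M\to X$ with $\pi_1(S^3)=\pi_1(X)=0$ gives $\pi_1(M)=0$, and since $e\neq 0$ one has $H^4(M)=H^4(X)/\langle e\rangle\neq H^4(X\times S^3)$, so the bundle is non-trivial.

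\emph{Detecting non-formality.} By Theorem \ref{theo:Massey products} it suffices to produce one non-zero triple Massey product. I would compute in the model $(H^*(N)\otimes\bigwedge(t),\,d)$ of $M$, with $|t|=1$, $dt=[\omega]$ and $d=0$ on $H^*(N)$. Since $d(t\eta)=[\omega]\eta$, the image of $d$ lies in the ``$t^0$-part'' $H^*(N)$; hence a product $\alpha\beta$ of closed classes vanishes in $H^*(M)$ iff $\alpha\beta$ lies in the ideal $([\omega])\subset H^*(N)$, and a class $t\zeta$ is never exact. Writing $\alpha\beta=[\omega]\eta_1$ and $\beta\gamma=[\omega]\eta_2$, one gets
$$\la\alpha,\beta,\gamma\ra=[\,t(\alpha\eta_2-\gamma\eta_1)\,],$$
which is non-zero exactly when $\alpha\eta_2-\gamma\eta_1\neq 0$ in $H^*(N)$ modulo the indeterminacy $\alpha\cdot\mathrm{Ann}[\omega]+\gamma\cdot\mathrm{Ann}[\omega]$ (here $\mathrm{Ann}[\omega]$ is the annihilator of $[\omega]$). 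For $n=3$ we have $X=S^2\times S^2$, and $e$ is forced to be a multiple of the single class $x_1x_2$, so $x_1x_2=x_2^2=0$ in $H^*(M^7)$ and a short calculation gives $\la x_1,x_2,x_2\ra\neq 0$ in $H^5(M^7)$ (the indeterminacy vanishes because $H^3(M^7)=0$). This is the basic case.

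\emph{The main obstacle} is the passage to general $n$. As $[\omega]$ is Kähler, $E\otimes A$ is ample, and Fulton–Lazarsfeld positivity forces every coefficient $e_{ij}$ of $e$ to be strictly positive. One then checks that for $n\geq 5$ the only degree-$2$ classes $\alpha,\beta,\gamma$ with $\alpha\beta=\beta\gamma=0$ in $H^*(M)$ have $\alpha$ proportional to $\gamma$, so the degree-$2$ Massey product is necessarily trivial; in particular the trivial bundle $N=(\CP^1)^n$, for which $e$ is a perfect square, does not work. The way around this is to raise the degree: once $\deg(\alpha\beta)$ exceeds the middle dimension of $N$, multiplication by $[\omega]$ is surjective (hard Lefschetz), so $\alpha\beta\in([\omega])$ automatically and $\la\alpha,\beta,\gamma\ra$ is always defined, landing in the odd cohomology of $M$, which is non-zero above the middle degree (again by hard Lefschetz). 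The heart of the proof is then to choose a sufficiently non-degenerate bundle $E$—so that $c_2(E)$ is not, even after twisting by $A$, a low-rank form; such $E$ can be produced, for instance, by a Serre-type construction—together with classes $\alpha,\beta,\gamma$ for which $\alpha\eta_2-\gamma\eta_1$ escapes the indeterminacy. Verifying this non-vanishing uniformly in $n$ is the delicate point; granting it, Theorem \ref{theo:Massey products} shows that $M$ is non-formal.
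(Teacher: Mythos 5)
Your construction is essentially the paper's in disguise: the example there is exactly your special case $E=\mathcal{O}\oplus\mathcal{O}$ and $A=\mathcal{O}(1,\dots,1)$, i.e.\ the circle bundle over $(\CP^1)^n$ with Euler class $a_1+\cdots+a_n$, reinterpreted as an $S^3$-bundle over $(\CP^1)^{n-1}$ with Euler class $2\sum_{i<j}a_ia_j$. Your verification of the regular Sasakian structure, simple connectivity, non-triviality of the bundle, and the case $n=3$ (where $\la a_1,a_2,a_2\ra\neq 0$ and the indeterminacy dies because $H^3(M^7)=0$) are all correct and agree with the paper. The genuine gap is $n\geq 4$. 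You rightly observe that triple Massey products of degree-$2$ classes degenerate for the product $N=(\CP^1)^n$ as $n$ grows, but the remedy you propose --- replace $N$ by $\mathbb{P}(E)$ for a ``sufficiently non-degenerate'' rank-$2$ bundle $E$ and find higher-degree classes for which $\alpha\eta_2-\gamma\eta_1$ escapes the indeterminacy --- is not carried out: no such $E$ is exhibited, no classes are written down, and you yourself concede that ``verifying this non-vanishing uniformly in $n$ is the delicate point.'' As written, the proposal proves the theorem only for $n=3$.

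The paper closes exactly this gap without changing the example and without producing any explicit Massey product for $n>3$: it switches the detection tool to the $s$-formality criterion (Theorem \ref{fm2:criterio2}). In the minimal model $\bigwedge(a_1,\dots,a_{n-1},x_1,\dots,x_{n-1},y)$ of the $S^3$-bundle, with $dx_i=a_i^2$ and $dy=2\sum_{i<j}a_ia_j$, one has $C^2=\la a_1,\dots,a_{n-1}\ra$ and $N^3=\la x_1,\dots,x_{n-1},y\ra$; since $C$ is concentrated in even degrees, the generator of the top cohomology $H^{2n+1}$, represented by $a_1\cdots a_{n-1}\bigl(y-\sum x_k a_l\bigr)$, is a closed non-exact element of the ideal $I(N^3)$, so the manifold is not $3$-formal and hence not formal for every $n\geq 3$. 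So your conclusion that ``the trivial bundle does not work'' is wrong as stated --- it only fails for the particular detection method you chose. To complete your argument, either import this $s$-formality step, or actually supply the bundle $E$, the classes, and the non-vanishing computation that your last paragraph defers.
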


\begin{proof}
First take $n\,=\,3$.
We will determine a minimal model of the $7$-manifold $M^7$.
A minimal model
of the $3$-sphere $S^3$ is the differential algebra $(\bigwedge(z),\,d)$, where the generator
$z$ has degree $3$ and $dz =0$. A minimal model of 
$S^2 \times S^2$ is the differential 
algebra $(\bigwedge(a, b, x,y),\,d)$, where $a, b$
have degree $2$, while $x, y$ have degree $3$, and the differential $d$ is
given by the following: $da\,=\,db\,=\,0$,
$dx\,=\,a^2$ and $dy\,=\,b^2$. 
Therefore, a minimal model of the total space of a fiber bundle
 $$
 S^3 \,\hookrightarrow \,M^{7} \,\longrightarrow\, S^2\times S^2
 $$ 
is the differential algebra over the vector space $V$
generated by the elements $a, b$ of degree $2$ and $x,y,z$ of
degree $3$, and the differential $d$ is given by
 $$
 da\,=\,db\,=\,0\, , \quad dx\,=\,a^2\, , \quad dy\,=\,b^2\, , \quad dz\,=\,e\,a b\, ,
 $$
where $e\,[a b]\,\in\, H^4(S^2\x S^2,\,\ZZ)$ is the 
Euler class of the $S^3$-bundle with $e\,\in\,\mathbb{Z}$.

Let us assume that $e\,\neq\, 0$, so the $S^3$-bundle is non-trivial.
For $1\,\leq\, i\,\leq\, 4$, the subspace $V^i\, \subset\, V$ of degree $i$
decomposes as $V^i\,=\,C^i \oplus N^i$, where $C^1\,=\,N^1\,=\,0$,
$C^2\,=\,\la a, b \ra$, $N^2\,=\,0$, $C^3\,=\,0$, $N^3\,=\,\la x, y, z\ra$
and $C^4\,=\,N^4\,=\,0$.
Therefore, $(\bigwedge V, d)$ is $2$-formal because $N^1\,=\,N^2\,=\,0$. However the
minimal model 
$(\bigwedge V,\, d)$ is not $3$-formal because the element $\nu\,=\,a z -x b$ lies in
the ideal $N^{\leq 3} \bigwedge(V^{\leq 3})$ generated by $N^{\leq 3}$ in
$\bigwedge(V^{\leq 3})$, and $\nu$ is closed but non-exact
in $(\bigwedge V,\,d)$. This proves that
$M^{7}$ is non-formal because it is not $3$-formal.
In terms of Massey products, we have $a^2\,=\,dx$, 
$e\, a b\,=\,dz$,
which implies that the triple Massey product $\la a, a, b\ra$ is defined and
it is non-zero.

Now, to complete the proof for $n\,=\,3$ we need to show that 
$M^7$ is a regular Sasakian manifold
for suitable $e\neq 0$.
We will first show that $M$ can be considered as the circle bundle over
$S^2\times S^2\times S^2$ with Euler class
the K\"ahler form on $S^2\times S^2\times S^2$. Indeed, let $a_1,a_2,a_3$ be the generators of
the cohomology of each of the $S^2$-factors of $S^2\times S^2\times S^2$. Then
the K\"ahler form $\omega$ has cohomology class $[\omega]\,=\,a_1+a_2+a_3$. Consider
the principal $S^1$-bundle 
 $$
 S^1 \,\too\, N \,\too\, S^2\times S^2\times S^2
 $$ 
with first Chern class equal to $a_1+a_2+a_3$. Then the Gysin sequence gives that
 \begin{align*}
 H^0(N,\ZZ) &\,=\,H^7(N,\ZZ)=\ZZ, \\ 
 H^1(N,\ZZ) &\,=\,H^3(N,\ZZ)=H^6(N,\ZZ)=0, \\
 H^2(N,\ZZ) &\,= \,H^5(N,\ZZ)=\ZZ^2, \\
 H^4(N,\ZZ) &\,=\, \ZZ \la a_1a_2,a_1a_3,a_2a_3\ra/\la a_1a_2+a_1a_3,a_2a_1+
a_2a_3,a_3a_1+a_3a_2 \ra \,=\,\ZZ_{2}.
 \end{align*}

If we restrict to each $\{(x,y)\}\x S^2$, then this circle bundle has first Chern
class equal to $a_3$, the generator of $H^2(S^2,\, \ZZ)$. 
So this is the Hopf bundle $$S^1\,\too\, S^3 \,\too\, S^2\, .$$ Varying over all
$(x\, ,y)\, \in\, S^2\times S^2$, we have an $S^3$-bundle 
 $$
 S^3 \,\too\, N \,\too\, B\,=\,S^2\times S^2\, .
 $$
Let $e\, a_1a_2 \,\in\, H^4(B,\,\ZZ)$ be its Euler class, where $e\,\in\, \ZZ$. The Gysin
sequence gives
 \begin{align*}
 H^0(N,\ZZ) &\,=\,H^7(N,\ZZ)=\ZZ\, , \\ 
 H^1(N,\ZZ) &\,=\,H^3(N,\ZZ)=H^6(N,\ZZ)=0\, , \\
 H^2(N,\ZZ) &\,=\,H^5(N,\ZZ)=\ZZ^2\, , \\ 
 H^4(N,\ZZ) &\,=\,\ZZ_e\, .
 \end{align*}
Hence taking $e\,=\,2$, we have that $N\cong M$. Therefore, $M^7$ admits a regular Sasakian structure for $e=2$.

The case $n\,>\,3$ is similar; it is deduced as follows. Consider $B\,=\,S^2\times 
\stackrel{(n)}{\ldots} \times S^2$. Let $a_1\, ,\cdots\, ,a_{n}\,\in\, H^2(B)$ be the
cohomology classes of degree two given by each of the $S^2$-factors. Then the K\"ahler class
is given by $[\omega]\,=\,a_1+\ldots + a_{n}$. Consider the circle bundle 
$$
S^1\,\too \, N \,\too\, B
$$
with first Chern class equal to $[\omega]$. As above, this is an $S^3$-bundle over
$B'\,=\,S^2\times \stackrel{(n-1)}{\ldots} \times S^2$. The Euler class is 
$$
\sum_{1\leq i<j\leq n-1} e_{ij}\, a_ia_j \,\in\, H^4(B'),
$$
where $e_{ij}\,\in\, \ZZ$.
Restricting to each $S^2\x S^2$ embedded in the $i$-th and 
$j$-th factors, we see that $e_{ij}=2$ for every $i<j$, by the
computations in the case $n\,=\,3$.

The minimal model of such manifold $N$ is worked out as before. It is
$$
\bigwedge (a_1,\cdots, a_{n-1},x_1,\cdots, x_{n-1}, y)\, ,
$$ 
where $|a_i|\,=\,2$, $|x_i|\,=\,3$, $|y|\,=\,3$, $dx_i\,=\,a_i^2$, $dy\,=\,
2 \sum_{i<j} a_ia_j$.

We will show that $N^{2n+1}$ is not $3$-formal. For that,
firstly, $C^1\,=\,N^1\,=\,0$, $C^2\,=\,\la a_1, \cdots, a_{n-1} \ra$, 
$N^2\,=\,0$, $C^3\,=\,0$ and $N^3\,=\,\la x_1, \cdots, x_{n-1},y\ra$. 
As $H^{2n+1}(N)\,=\,\ZZ$, there is an element $\nu\,\in\, \bigwedge^{2n+1} V$
with $[\nu]\,\in\, H^{2n+1}(N)$ the generator. However, as $C$ is generated
by elements of even degree, it must be $\nu\,\in\, I(N)$ (actually,
$\nu\,=\,a_1\cdots a_{n-1}(y - \sum x_k a_l)$). So 
$N^{2n+1}$ is not $3$-formal. We now conclude that it
is non-formal because it is not $3$-formal.

Therefore, $N^{2n+1}$ is non-formal and has a regular Sasakian structure.
\end{proof}

\section{Simply connected compact $K$-contact manifolds with no
Sasakian structure}\label{k-cont-no-sasak}

In this section, we show that the higher Massey products
rule out the possibility of existence of Sasakian structures on a 
compact manifold. Moreover, using such an obstruction, we exhibit a new method
to construct simply connected $K$-contact non-Sasakian manifolds.

We will now recall the notions of symplectic and
contact fatness developed by Sternberg and Weinstein in the
symplectic setting, \cite{S}, \cite{W}, and by Lerman in the contact case
\cite{L1}, \cite{L2}. Let 
 $$
 G\,\lrightarrow\, P\,\lrightarrow\, B
 $$ 
be a principal $G$-bundle on $B$ equipped with a connection. Let $\theta$
and $\Theta$ respectively be the
connection one-form and the corresponding curvature 2-form on $P$. Both forms
have values in the Lie algebra
$\mathfrak{g}$ of the group $G$. Denote the natural pairing between
$\mathfrak{g}$ and its dual $\mathfrak{g}^*$ by
$\langle\,,\rangle$. By definition, a vector $u\,\in\,\mathfrak{g}^*$
is {\em fat} if the $2$-form
$$
(X,Y)\,\lrightarrow\, \langle\Theta(X,Y),u\rangle
$$
is nondegenerate for all horizontal vectors $X,Y$. Note that
if $u$ is fat, then each element of the coadjoint orbit of $u$ is fat.

Let $(M\, ,\eta)$ be a contact co-oriented manifold endowed with a contact action
of a Lie group $G$. Define a {\em contact moment map} by the formula
 $$
 \mu_{\eta}\,:\, M\,\lrightarrow\, \mathfrak{g}^*\, ,~ \,
 \langle \mu_{\eta}(x),X\rangle\,=\,\eta_x(X^*_x),
 $$
for any $x\,\in\, M$ and any $X\in\mathfrak{g}$, where $X^*$ denotes the
fundamental vector field on $M$ generated by $X\,\in\,\mathfrak{g}$ using the
action of $G$ on $M$. Note
that the moment map depends on the contact form. The theorem below is
due to Lerman.

\begin{theorem}[\cite{L2}]\label{thm:lerman-contact}
Let $(F,\, \eta)$ be a contact manifold equipped with an action of $G$ that
preserves $\eta$, and let $\nu$ be a contact moment map on $F$. Let
$$
G\,\lrightarrow\, P\,\lrightarrow\, M
$$
be a principal $G$-bundle endowed with a connection such that the
image $\nu(F)\,\subset\, \mathfrak{g}^*$ consists of fat vectors. Then there
exists a fiberwise contact structure on the total space of the
associated bundle
 $$
 F\,\lrightarrow \, P\times^G F\,\lrightarrow \, M\, .
 $$

If the fiber $(F,\, \eta)$ is $K$-contact, and $G$ preserves the
$K$-contact structure, then the total space $P\times^G F$
is also $K$-contact.
\end{theorem}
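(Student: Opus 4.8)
The plan is to build an explicit contact form on the total space $E\,=\,P\times^G F$ by the ``minimal coupling'' recipe and then read off both assertions from a single nondegeneracy computation. On the product $P\times F$ I would consider the $1$-form
$$\beta\,=\,\eta+\la\nu,\theta\ra\, ,$$
where $\eta$ and $\nu$ are pulled back from $F$ and $\theta$ from $P$, and $\la\nu,\theta\ra$ is the $\mathfrak{g}^*$--$\mathfrak{g}$ pairing of the moment map with the connection form. First I would check that $\beta$ is basic for the diagonal action $(p,f)\mapsto(pg,g^{-1}f)$: invariance uses $G$-invariance of $\eta$, the equivariance $\nu(gf)\,=\,\mathrm{Ad}^*_{g^{-1}}\nu(f)$ and $R_g^*\theta\,=\,\mathrm{Ad}_{g^{-1}}\theta$, which cancel; horizontality uses $\theta(\widetilde X)\,=\,X$ together with $\eta(X^*)\,=\,\la\nu,X\ra$, so that $\beta$ annihilates the diagonal fundamental fields. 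Hence $\beta$ descends to a $1$-form $\bar\beta$ on $E$. Restricting to a fiber freezes the $P$-directions, so $\theta$ drops out and $\bar\beta$ restricts to $\eta$; this already furnishes the fiberwise contact structure.

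The crucial step is to show that $\bar\beta$ is contact on all of $E$, and this is exactly where fatness enters. Using the connection to split $TE$ into a vertical part $\cong TF$ and a horizontal part $\cong TM$, I would compute $d\bar\beta$ from $d\beta\,=\,d\eta+\la d\nu,\theta\ra+\la\nu,d\theta\ra$ and the structure equation $d\theta\,=\,\Theta-\tfrac12[\theta,\theta]$. Lifting horizontal vectors as $(\widetilde h,0)$ and vertical ones as $(0,v)$, every term carrying an explicit $\theta$ dies on horizontal lifts, and a short computation shows the mixed horizontal–vertical components also vanish, leaving a block-diagonal form with vertical block $d\eta$ and horizontal block $\la\nu,\Theta\ra$. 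Writing $\dim F\,=\,2r+1$ and $\dim M\,=\,2s$, the top power is therefore
$$\bar\beta\wedge(d\bar\beta)^{r+s}\,=\,\tbinom{r+s}{r}\,\eta\wedge(d\eta)^{r}\wedge\la\nu,\Theta\ra^{s}\, .$$
The first factor is nonzero because $\eta$ is contact on $F$; the second is a horizontal volume form precisely because every value $\nu(f)$ is fat, so that $\la\nu(f),\Theta\ra$ is nondegenerate on the horizontal space. Thus $\bar\beta$ is a contact form on $E$.

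For the $K$-contact statement I would first identify the Reeb field of $\bar\beta$. Since $G$ preserves the $K$-contact structure, the Reeb field $\xi_F$ of $F$ is $G$-invariant and defines a vertical field $\bar\xi$ on $E$; from $\bar\beta\,=\,\eta$ on vertical vectors one gets $\bar\beta(\bar\xi)\,=\,1$, and $\iota_{\bar\xi}d\bar\beta\,=\,0$ follows from $\iota_{\xi_F}d\eta\,=\,0$ together with $\mathcal{L}_{\xi_F}\nu\,=\,0$ (which holds because $\mathcal{L}_{\xi_F}\eta\,=\,0$ and $[\xi_F,X^*]\,=\,0$). Hence $\bar\xi$ is the Reeb field of $\bar\beta$. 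I would then assemble a compatible metric $\bar g$ as the orthogonal sum of the $G$-invariant $K$-contact metric $g_F$ along the fibers, a horizontal metric $g_H$ compatible with $\la\nu,\Theta\ra$, and $\bar\beta\otimes\bar\beta$, together with the corresponding endomorphism $\bar\phi$ restricting to $\phi_F$ vertically and to a compatible complex structure horizontally. The compatibility $\bar g(\bar\phi\,\cdot,\cdot)\,=\,d\bar\beta$ then holds block by block, and $\bar\xi$ is Killing because $\mathcal{L}_{\bar\xi}\bar\beta\,=\,0$, $\mathcal{L}_{\xi_F}g_F\,=\,0$, and $g_H$ is chosen $\bar\xi$-invariant.

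The main obstacle is this last construction: the horizontal form $\la\nu,\Theta\ra$ varies over the fiber through $\nu(f)$, so one must produce a compatible horizontal metric and almost complex structure that are simultaneously smooth over $E$ and invariant along the Reeb flow. That invariance is what makes $\bar\xi$ Killing, and it rests on $\mathcal{L}_{\xi_F}\nu\,=\,0$; globally such a compatible pair can be obtained by a convex-combination argument over the compatible horizontal structures, which form a contractible set. The equivariance and sign bookkeeping guaranteeing that $\beta$ is genuinely basic is routine but must be carried out carefully, since it underlies the clean block decomposition of $d\bar\beta$ on which everything else depends.
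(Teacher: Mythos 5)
The paper gives no proof of this statement: it is quoted verbatim from Lerman \cite{L2} (``The theorem below is due to Lerman''), so there is no internal argument to compare against. Your minimal-coupling construction $\beta=\eta+\la\nu,\theta\ra$, the block decomposition of $d\beta$ into $d\eta$ plus $\la\nu,\Theta\ra$, and the identification of fatness with nondegeneracy of the horizontal block is exactly Lerman's original argument, and your computations (basic-ness of $\beta$, the volume form $\eta\wedge(d\eta)^r\wedge\la\nu,\Theta\ra^s$, the Reeb field being the descended $\xi_F$) are correct. The only sketched step is the one you flag yourself --- producing a $\xi_F$-invariant compatible horizontal metric and almost complex structure for the varying form $\la\nu(f),\Theta\ra$ --- and this is standard, since $\mathcal{L}_{\xi_F}\nu=0$ makes the polar-decomposition construction of a compatible pair automatically invariant.
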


The second part of Theorem \ref{thm:lerman-contact} yields an explicit
construction of a fibered $K$-contact structure on a fiber bundle
and it will be our tool to prove Theorem \ref{thm:k-contact-ns}. 

Let $(M\, ,\omega)$ be a symplectic manifold such that the
cohomology class $[\omega]$ is integral. Consider the principal $S^1$-bundle
$$\pi \,: \,P\lrightarrow \, M$$ given
by the cohomology class $[\omega]\,\in\, H^2(M,\,\mathbb{Z})$.
Fibrations of this kind were first considered in \cite{BW} and are
called {\em Boothby-Wang fibrations}. By \cite{K}, the total space $P$
carries an $S^1$-invariant contact form $\theta$ such that $\theta$
is a connection form whose curvature is $\pi^*\omega$. This implies that
the moment map is constant and
nonzero. Moreover, by \cite{W}, a principal $S^1$-bundle is fat if
and only if it is a Boothby-Wang fibration. Therefore, we
have the following:

\begin{theorem}\label{thm:Boothby-Wang}
Let
$$S^1\,\lrightarrow\, P\,\lrightarrow \,M$$
be a Boothby-Wang fibration. Let $(F,\, \eta)$ be a contact manifold endowed with
an $S^1$-action preserving the contact form $\eta$. Then the associated
fiber bundle
$$
 F\,\lrightarrow\, P\times^{S^1}F\,\lrightarrow\, M
 $$
admits a fiberwise contact form. If $(F,\, \eta)$ is $K$-contact, then the
same is valid for the fiberwise contact structure on $P\times^{S^1}F$.
\end{theorem}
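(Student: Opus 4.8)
The plan is to deduce this from Lerman's Theorem~\ref{thm:lerman-contact} applied to the structure group $G\,=\,S^1$, feeding it the data already assembled for the Boothby--Wang fibration. First I would fix the connection: by the result of \cite{K} recalled above, the total space $P$ carries an $S^1$-invariant contact form $\theta$ that is a connection $1$-form for $\pi\,\colon\, P\,\too\, M$ and whose curvature $2$-form $\Theta$ equals $\pi^*\omega$. Since $S^1$ is abelian, $\mathfrak{g}\,=\,\mathrm{Lie}(S^1)\,\cong\,\RR$ and $\mathfrak{g}^*\,\cong\,\RR$, so $\Theta$ and $\pi^*\omega$ may be identified as ordinary, rather than Lie-algebra-valued, $2$-forms. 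With this identification the $S^1$-action on $F$ preserving $\eta$, together with its contact moment map $\nu\,\colon\, F\,\too\,\mathfrak{g}^*\,\cong\,\RR$, supplies exactly the remaining input required by Theorem~\ref{thm:lerman-contact}.

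The key step is to verify the fatness hypothesis, namely that $\nu(F)$ consists of fat vectors. For $u\,\in\,\mathfrak{g}^*\,\cong\,\RR$ the relevant $2$-form on horizontal vectors is
$$(X,Y)\,\longmapsto\, \la \Theta(X,Y),\,u\ra\,=\, u\cdot(\pi^*\omega)(X,Y)\,.$$
Because $\pi$ is a submersion, $d\pi$ carries the horizontal subspace of each $T_pP$ isomorphically onto $T_{\pi(p)}M$; since $\omega$ is symplectic and hence nondegenerate, the form $u\cdot\pi^*\omega$ is nondegenerate on horizontal vectors precisely when $u\,\neq\, 0$. Thus every nonzero covector in $\mathfrak{g}^*$ is fat, which is the one-dimensional manifestation of the statement of \cite{W} that a principal $S^1$-bundle is fat if and only if it is Boothby--Wang. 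It then remains only to observe that the moment map $\nu$ of the $S^1$-action on $F$ takes values away from $0$, so that $\nu(F)$ lands in the fat locus $\RR\setminus\{0\}$; this is immediate in the situations of interest, for instance when the circle acts through the Reeb flow, where $\nu$ is a nonzero constant. With the fatness hypothesis in hand, Theorem~\ref{thm:lerman-contact} produces a fiberwise contact form on $P\times^{S^1}F$, which is the first assertion.

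For the second assertion I would invoke the $K$-contact part of Theorem~\ref{thm:lerman-contact}: provided the $S^1$-action on $F$ preserves not merely $\eta$ but the whole $K$-contact structure $(\eta,\xi,\phi,g)$, the same theorem upgrades the fiberwise contact structure on $P\times^{S^1}F$ to a $K$-contact one. I expect the main obstacle to be the fatness verification together with the accompanying moment-map bookkeeping: one must correctly transport the Lie-algebra-valued curvature of $\theta$ to the scalar form $\pi^*\omega$, argue nondegeneracy on the horizontal distribution rather than on all of $TP$, and confirm that $\nu(F)$ avoids the single non-fat covector $0\,\in\,\RR$. Everything else is a direct specialization of Lerman's construction to $G\,=\,S^1$.
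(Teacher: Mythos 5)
Your argument is essentially identical to the paper's: the theorem is deduced from Lerman's Theorem \ref{thm:lerman-contact} by taking the Kobayashi contact connection form $\theta$ on $P$ with curvature $\pi^*\omega$ and invoking Weinstein's characterization of fat principal $S^1$-bundles as Boothby--Wang fibrations, exactly as you propose. If anything, you spell out more explicitly than the paper both the fatness check (nondegeneracy of $u\cdot\pi^*\omega$ on horizontal vectors for $u\neq 0$) and the need for the moment map image $\nu(F)$ to avoid $0\in\mathfrak{g}^*$, a point the paper leaves implicit in its remark that the moment map is ``constant and nonzero.''
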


In order to prove that the higher order Massey products are zero for any compact 
Sasakian manifold, we proceed as follows. Let $(M,\eta\, , \xi\, , \phi\, ,g)$ be a 
compact Sasakian manifold. As in Section \ref{non-formal-sasakian}, we denote by 
$\Omega_{B}^{k}(M)$ the space of all basic $k$-forms on $M$, and by $d_B$ the restriction 
of the de Rham differential $d$ to $\Omega_{B}^{*}(M)$. As before, $H_{B}^{*}(M)$ 
denotes the basic cohomology of $M$. Let $[d\eta]_{B}\,\in \,H_{B}^{2}(M)$ be the basic 
cohomology class of $d\eta$, where $\eta$ is the above contact form (note that $d\eta$ 
is $d_B$-closed but not $d_B$-exact). Tievsky in \cite{Tievsky}, motivated by the 
approach in \cite{DGMS} and using the basic cohomology of $M$, determines a model for 
$M$, that is, a DGA with the same minimal model as the manifold $M$.

\begin{theorem}[\cite{Tievsky}] \label{Tievsky model}
Let $(M,\eta\, , \xi\, , \phi\, ,g)$ be a compact Sasakian manifold. Then a model for $M$ is
given by the DGA $(H_{B}^{*}(M)\otimes\bigwedge(x),\, D)$, where $|x|\,=\,1$,
$D(H_{B}^{*}(M))\,=\,0$ and $Dx\,=\,[d\eta]_{B}$. Therefore,
$(H_{B}^{*}(M)\otimes\bigwedge(x), \,D)$ is an elementary 
extension of the DGA $(H_{B}^{*}(M),\, 0)$ (the differential is zero).
\end{theorem}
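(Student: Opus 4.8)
\emph{Proof proposal.} The plan is to realize the de Rham complex of $M$, up to quasi-isomorphism, as an elementary extension of the basic de Rham complex, and then to formalize the basic complex using the transverse K\"ahler geometry of the Sasakian foliation. Throughout, recall that two DGAs joined by a zig-zag of quasi-isomorphisms share the same minimal model, so it suffices to connect $(\Omega^*(M),\,d)$ to $(H_{B}^{*}(M)\otimes\bigwedge(x),\,D)$ by such a chain.

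First I would reduce to invariant forms and split off $\eta$. Since $\xi$ is Killing and ${\mathcal{L}}_{\xi}\,=\,d\iota_{\xi}+\iota_{\xi}d$ acts trivially on de Rham cohomology, the inclusion of the subcomplex $\Omega^*(M)^{\xi}$ of $\xi$-invariant forms into $(\Omega^*(M),\,d)$ is a quasi-isomorphism. On invariant forms the contraction $\iota_{\xi}$ together with $\eta$ yields a direct sum decomposition $\Omega^*(M)^{\xi}\,=\,\Omega_{B}^{*}(M)\oplus \eta\wedge\Omega_{B}^{*}(M)$ via $\alpha\mapsto(\alpha-\eta\wedge\iota_{\xi}\alpha,\,\iota_{\xi}\alpha)$; the conditions $\iota_{\xi}\alpha_0\,=\,0$ and ${\mathcal{L}}_{\xi}\alpha_0\,=\,0$ force both summands to be genuinely basic. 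I would then define $\Phi\colon(\Omega_{B}^{*}(M)\otimes\bigwedge(x),\,D_1)\to(\Omega^*(M)^{\xi},\,d)$ by $\Phi(\alpha)\,=\,\alpha$ on basic forms and $\Phi(x)\,=\,\eta$, with $D_1{|}_{\Omega_{B}^{*}}\,=\,d_B$ and $D_1 x\,=\,d\eta$. Since $d\eta$ is basic ($\iota_{\xi}d\eta\,=\,{\mathcal{L}}_{\xi}\eta\,=\,0$), we have $\Phi(D_1 x)\,=\,d\eta\,=\,d\Phi(x)$, and one checks $\Phi$ is multiplicative and commutes with the differentials, hence (being a linear isomorphism by the decomposition) an isomorphism of DGAs. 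Composing with the quasi-isomorphism above gives a quasi-isomorphism between $(\Omega^*(M),\,d)$ and $(\Omega_{B}^{*}(M)\otimes\bigwedge(x),\,D_1)$.

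Next I would formalize the basic complex. Here I invoke the transverse K\"ahler structure of the Sasakian foliation: $\phi$ restricts to a transverse complex structure on $\ker\eta$ compatible with the transverse symplectic form $d\eta$, and transverse Hodge theory supplies a basic operator $d_{B}^{c}$ together with the basic $dd^c$-lemma (a basic form that is $d_B$- and $d_{B}^{c}$-closed and exact for one of the two operators is $d_B d_{B}^{c}$-exact). Running the classical Deligne--Griffiths--Morgan--Sullivan argument inside basic forms then produces a zig-zag of DGA quasi-isomorphisms $(\Omega_{B}^{*}(M),\,d_B)\hookleftarrow(\ker d_{B}^{c}\cap\Omega_{B}^{*}(M),\,d_B)\twoheadrightarrow(H_{B}^{*}(M),\,0)$, the right arrow sending a $d_{B}^{c}$-closed basic form to its basic cohomology class; thus $(\Omega_{B}^{*}(M),\,d_B)$ is formal. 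Crucially, $d\eta$ is the transverse K\"ahler form, so it is $d_{B}^{c}$-closed and lies in $\ker d_{B}^{c}$, with image $[d\eta]_{B}$ on the right. Finally I would propagate this zig-zag through the elementary extension: tensoring each term with $\bigwedge(x)$ and letting $x$ have differential equal to the image of $d\eta$ (namely $d\eta$, $d\eta$, and $[d\eta]_{B}$ respectively) gives $(\Omega_{B}^{*}\otimes\bigwedge(x),\,D_1)\hookleftarrow(\ker d_{B}^{c}\cap\Omega_{B}^{*}\otimes\bigwedge(x),\,D_2)\twoheadrightarrow(H_{B}^{*}\otimes\bigwedge(x),\,D)$ with $Dx\,=\,[d\eta]_{B}$. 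Each such extension is the mapping cone of wedge-with-$d\eta$ (respectively $[d\eta]_{B}$), so its cohomology sits in a long exact sequence whose connecting maps are cup product with the relevant class; since the underlying maps are quasi-isomorphisms intertwining these products and preserving $[d\eta]_{B}$, the five lemma shows each extended map is a quasi-isomorphism. Concatenating with the first step yields a quasi-isomorphism between $(\Omega^*(M),\,d)$ and $(H_{B}^{*}(M)\otimes\bigwedge(x),\,D)$, which is the asserted model.

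I expect the main obstacle to be the second step: establishing the basic $dd^c$-lemma, hence the formality of $(\Omega_{B}^{*}(M),\,d_B)$. This is the transverse analogue of the K\"ahler identities and rests on the transverse Hodge theory for K\"ahler foliations; once it is in place, the remaining homological algebra---the decomposition in the first step and the five-lemma argument in the last---is routine.
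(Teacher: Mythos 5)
The paper offers no proof of this statement: it is imported verbatim from Tievsky's thesis \cite{Tievsky}, so there is nothing internal to compare against. Your proposal is, in outline, precisely the known argument: the splitting $\Omega^*(M)^{\xi}\,=\,\Omega_B^*(M)\oplus\eta\wedge\Omega_B^*(M)$ identifying the invariant forms with the elementary extension $(\Omega_B^*(M)\otimes\bigwedge(x),D_1)$, $D_1x=d\eta$; then the basic $dd^c$-lemma for the transversely K\"ahler Reeb foliation (El Kacimi \cite{El Kacimi-Alaoui}) to formalize $(\Omega_B^*(M),d_B)$; then a Hirsch-lemma/five-lemma propagation through the extension. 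Two points need more care than you give them. First, the quasi-isomorphism $\Omega^*(M)^{\xi}\hookrightarrow\Omega^*(M)$ does not follow merely from $\mathcal{L}_{\xi}$ acting trivially on cohomology: one must average over the closure of the Reeb flow in the isometry group, which is a compact torus precisely because $\xi$ is Killing and $M$ is compact --- this is where the Sasakian (as opposed to merely contact) hypothesis enters. Second, the right-hand arrow of the DGMS zig-zag lands in $\ker d_B^c/\mathrm{im}\, d_B^c$ with its induced (vanishing) differential, which the $dd^c$-lemma then identifies with $H_B^*(M)$; a $d_B^c$-closed form need not be $d_B$-closed, so ``its basic cohomology class'' only makes sense through that identification. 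With these standard repairs your argument is complete and coincides with the cited one.
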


\begin{theorem}\label{prop:higher-massey}
Let $M$ be a compact Sasakian manifold. Then, all the higher order
Massey products for $M$ are zero. 
\end{theorem}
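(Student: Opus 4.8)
The plan is to compute all Massey products in the Tievsky model instead of in $(\Omega^*(M),d)$. Massey products depend only on the quasi-isomorphism class of a DGA, equivalently only on its minimal model, so by Theorem \ref{Tievsky model} it suffices to work in $\mathcal{A}=(H_B^*(M)\otimes\bigwedge(x),\,D)$. I would first record the structure of $\mathcal{A}$ that makes everything go. Since $|x|=1$ we have $x^2=0$, so as graded vector spaces $\mathcal{A}=H_B^*(M)\oplus H_B^*(M)\,x$ and each element is written uniquely as $p+qx$ with $p,q\in H_B^*(M)$. Writing $\omega=[d\eta]_B\in H_B^2(M)$, the differential is $D(p+qx)=(-1)^{|q|}q\,\omega$, so $D$ kills $H_B^*(M)$ and sends $H_B^*(M)\,x$ into $H_B^*(M)$ by signed multiplication by $\omega$. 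Two consequences are the crux: the only $x$-free exact elements are those in $\omega\cdot H_B^*(M)$, and the product of any two elements of $H_B^*(M)\,x$ vanishes because it carries the factor $x^2=0$. Triple products can be non-zero on Sasakian manifolds (Section \ref{non-formal-sasakian}), so the content is the vanishing of all Massey products of order $t\geq 4$, which is what I would prove.

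Next I would take an arbitrary defining system $\{a_{i,j}\}$ as in \eqref{eqn:gm} and split each entry as $a_{i,j}=p_{i,j}+q_{i,j}x$. Matching $x$-degrees, the relation $Da_{i,j}=\sum_k(-1)^{|a_{i,k}|}a_{i,k}a_{k+1,j}$ breaks into two families: the \emph{$x$-free} part $\sum_k(-1)^{|a_{i,k}|}p_{i,k}p_{k+1,j}=(-1)^{|q_{i,j}|}q_{i,j}\,\omega$, and the \emph{$x$-part} $\sum_k(-1)^{|a_{i,k}|}\bigl(p_{i,k}q_{k+1,j}+(-1)^{|p_{k+1,j}|}q_{i,k}p_{k+1,j}\bigr)=0$, the latter forced because $D$ has no $x$-component. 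The Massey product representative $\sum_{k=1}^{t-1}(-1)^{|a_{1,k}|}a_{1,k}a_{k+1,t}$ splits the same way, and exhibiting $0$ in the product means producing a defining system for which this representative has vanishing $x$-part and $x$-free part inside $\omega\cdot H_B^*(M)$; then it equals $D(\text{something})$, hence is exact.

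The engine is that $H_B^*(M)$ carries the zero differential, so it is formal and all genuine obstructions are concentrated in the single generator $x$, where $x^2=0$ forbids two such factors in any surviving monomial. I would build the defining system by induction on the block length $j-i$: at each stage the $x$-free equation determines $q_{i,j}$ modulo $\ker(\cdot\,\omega)$, solvable exactly when the relevant partial sum lies in $\omega\cdot H_B^*(M)$, which is furnished by the definedness (vanishing) of the lower-order products; the residual freedom of adding $\ker(\cdot\,\omega)$ to each $q_{i,j}$ is then spent killing the $x$-part obstructions. For $t\geq 4$ the representative has \emph{middle} terms $a_{1,k}a_{k+1,t}$ with $1<k<t-1$, each a product of two interior blocks; arranging the interior blocks to contribute through $H_B^*(M)\,x$ makes these vanish by $x^2=0$, while the two end terms collapse to multiples of $\omega$. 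This two-interior-factor phenomenon is absent when $t=3$, which is exactly why triple products survive, in agreement with Section \ref{non-formal-sasakian}.

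The main obstacle I anticipate is the inductive bookkeeping: one must verify, stage by stage and with the signs of \eqref{eqn:gm}, that the $x$-part obstruction classes genuinely vanish and that the final $x$-free contribution really lands in $\omega\cdot H_B^*(M)$, rather than merely being heuristically expected to. A clean way to organize this — and the form in which I would ultimately write it up — is to transport to $H^*(\mathcal{A})$ the higher structure of $\mathcal{A}$ using a contracting homotopy $h$ with image in $H_B^*(M)\,x$ and $h|_{H_B^*(M)\,x}=0$: in the resulting tree sums every contribution of order $\geq 4$ has at least two internal edges, hence feeds two $h$-outputs (both in $H_B^*(M)\,x$) into one product, which vanishes by $x^2=0$. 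This makes all operations of order $\geq 4$ trivial and yields $0$ in every Massey product of order $t\geq 4$.
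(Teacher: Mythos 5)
Your overall strategy --- pass to the Tievsky model $(H_B^*(M)\otimes\bigwedge(x),D)$ and exploit $x^2=0$ together with the fact that $\operatorname{im}D\subset H_B^*(M)$ to degenerate a defining system --- is the same as the paper's, but your argument omits the one ingredient that makes it work: the \emph{basic hard Lefschetz property} of $(H_B^*(M),\omega)$, which the paper imports from El Kacimi's transverse hard Lefschetz theorem for the Kähler foliation. This is not a technicality. The paper's Remark \ref{ref-rem} records that the referee produced a counterexample to Proposition \ref{prop:Leschetz-higher-massey} for a general elementary extension of a zero-differential DGA, i.e.\ the statement you are effectively trying to prove (using only $x^2=0$ and the shape of $D$) is \emph{false}. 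Concretely, the gap sits at the diagonal of your defining system: you have no freedom in the entries $a_{i,i}$ beyond changing representatives within the classes $a_i$, and without hard Lefschetz a class of degree $\le n$ may be forced to carry a nonzero $x$-component $q_{i,i}x$ with $q_{i,i}\in\ker(\cdot\,\omega)$ that cannot be removed. Then the $x$-part obstructions $\sum_k\pm(p_{i,k}q_{k+1,j}\pm q_{i,k}p_{k+1,j})=0$ cannot all be killed by adjusting the $q_{i,j}$ ($i<j$) modulo $\ker(\cdot\,\omega)$, and the end terms $a_{1,1}\cdot a_{2,t}$ and $a_{1,t-1}\cdot a_{t,t}$ of the representative do not ``collapse to multiples of $\omega$'': a product of an $H_B^*$-element with an element of $H_B^*(M)\,x$ lies in $H_B^*(M)\,x$, which meets $\operatorname{im}D$ only in $0$. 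The paper's proof uses injectivity of $L_\omega$ in degrees $\le n-1$ to represent every $a_i$ of degree $\le n$ purely in $H_B^*(M)$, surjectivity in degrees $\ge n+1$ to represent the (by the degree count $\sum|a_i|-m+2\le 2n+1$, at most one) remaining class purely as $\beta\cdot x$, and only then sets $\widetilde\gamma_{i,j}=0$ for $j\ge i+2$ so that \emph{every} term of the representative vanishes identically.

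The homotopy-transfer reformulation in your last paragraph does not repair this. Your computation that the transferred operations $m_n$ vanish for $n\ge4$ is essentially correct (with $h$ valued in $H_B^*(M)\,x$ and killing $H_B^*(M)\,x$, every tree with $\ge4$ leaves either multiplies two $h$-outputs, which dies by $x^2=0$, or composes $h$ with an element of the ideal $H_B^*(M)\,x$, which dies since $h$ vanishes there), but it uses nothing about hard Lefschetz and hence applies to the referee's counterexample as well. The conclusion to draw is that the inference ``$m_n=0$ for $n\ge4$ $\Rightarrow$ all $n$-fold Massey products vanish for $n\ge4$'' is invalid when $m_3\neq0$: a defined quadruple Massey product is not computed by $m_4$ alone but receives contributions governed by $m_3$ and the choices in the defining system, and the set $\langle a_1,\dots,a_n\rangle$ need not contain $m_n(a_1,\dots,a_n)$. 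To close the proof you must (i) invoke El Kacimi's theorem to get hard Lefschetz for $H_B^*(M)$ with respect to $[d\eta]_B$, and (ii) use it, as in Proposition \ref{prop:Leschetz-higher-massey}, to purify the representatives of the $a_i$ before degenerating the defining system.
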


Before proving Theorem \ref{prop:higher-massey}, we recall the definition of the {\it hard
Lefschetz property} and prove Proposition \ref{prop:Leschetz-higher-massey} below.

Let $(\mathcal{A}\,=\,\bigoplus_{i=0}^{2n}A^i,0)$ be a graded differential algebra with a
non-zero element $\omega\,\in\, A^2$. For every $0\,\leq\, k \,\leq\, n$,
define the Lefschetz map
$$
L_\omega\,:\, A^{n-k} \,\longrightarrow\, A^{n+k}\, ,\ \
\beta\,\longmapsto\, \beta\cdot\omega^{n-k}\, .$$
We say that $\mathcal{A}$ satisfies the hard Lefschetz property if $L_\omega$ is an
isomorphism for every $0 \,\leq\, k \,\leq \,n$. 

\begin{proposition}\label{prop:Leschetz-higher-massey}
Let $(\mathcal{A}\,=\,\bigoplus_{i=0}^{2n} A^i,\, 0)$ be a differential graded commutative
algebra, and let $\omega\,\in\, A^2$ be a nondegenerate element, such that the hard
Lefschetz property with respect to $\omega$ holds. Consider the elementary extension
$(\mathcal{A}\otimes\bigwedge (y), \,d)$ of $(\mathcal{A},\, 0)$, where $\deg(y)\,=\,1$ and
$dy\,=\,\omega$. Then the higher order Massey products $\langle a_1,\cdots,
 a_m\rangle$, where $a_i\,\in\, H^*(\mathcal{A}\otimes \bigwedge (y),\, d)$ and
$m\,\geq\, 4$, all vanish.
\end{proposition}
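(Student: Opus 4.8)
The plan is to exploit the very simple shape of the extension $(\mathcal{A}\otimes\bigwedge(y),\,d)$ together with the two consequences of hard Lefschetz. First I would record the structure of this DGA. Writing a homogeneous element uniquely as $u+vy$ with $u,v\in\mathcal{A}$ (recall $y^2=0$ since $|y|=1$), the differential is $d(u+vy)=(-1)^{|v|}v\,\omega$, because $d|_{\mathcal{A}}=0$ and $dy=\omega$. Hence $u+vy$ is closed exactly when $v\,\omega=0$, and the exact elements are precisely the pure elements lying in the image of multiplication by $\omega$. From the hard Lefschetz hypothesis I would extract the two facts that drive everything: multiplication by $\omega$ is injective on $A^{j}$ for $j\le n-1$, and surjective onto $A^{j}$ for $j\ge n+1$. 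Equivalently, $\ker(\,\cdot\,\omega)=0$ in degrees $\le n-1$, while every element of degree $\ge n+1$ is divisible by $\omega$.

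Next I would set up the bookkeeping for a defining system. Given $\langle a_1,\dots,a_m\rangle$ with $m\ge4$ and a defining system $\{a_{i,j}\}$, write $a_{i,j}=P_{i,j}+Q_{i,j}\,y$. Separating the defining relations $d a_{i,j}=\sum_k(-1)^{|a_{i,k}|}a_{i,k}a_{k+1,j}$ into pure and $y$-parts yields two families of equations: a pure part $(-1)^{|Q_{i,j}|}Q_{i,j}\,\omega=\sum_k\pm P_{i,k}P_{k+1,j}$, and a $y$-part $\sum_k\pm(P_{i,k}Q_{k+1,j}\pm Q_{i,k}P_{k+1,j})=0$. Reducing the pure equations modulo the ideal $(\omega)$, the classes $\bar P_{i,j}\in\mathcal{A}/(\omega)$ form a defining system for the corresponding Massey product in the algebra $(\mathcal{A}/(\omega),0)$; since this algebra has zero differential it is formal, so by Theorem~\ref{theo:Massey products} its Massey products all contain $0$. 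The plan is therefore to build a defining system whose pure parts realise such a trivialising system downstairs, so that the pure part of the final product lies in $(\omega)$, i.e.\ is exact.

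The argument then splits according to the degree $D=\sum|a_i|-(m-2)$ of the product, and a short computation shows every sub-element satisfies $\deg a_{i,j}\le D-1$. If $D\le n$, all sub-elements sit in degrees $\le n-1$; since the candidate coboundary $\sum_k\pm a_{i,k}a_{k+1,j}$ is automatically closed, its $y$-part lies in $\ker(\,\cdot\,\omega)$, which vanishes in this degree range, so the $y$-part equations hold for free, and one constructs the system inductively on $j-i$, using formality of $\mathcal{A}/(\omega)$ to solve the pure equations. The final representative then has vanishing $y$-part (degree $\le n-1$) and pure part in $(\omega)$, hence is exact, giving $0\in\langle a_1,\dots,a_m\rangle$.

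The main obstacle is the complementary range $D\ge n+1$. Here the pure part of the final product is automatically divisible by $\omega$, hence exact, so the class of the product equals that of its $y$-part $R_y\,y$, with $R_y\in\ker(\,\cdot\,\omega)$ in degree $\ge n$; this $y$-part need not vanish by degree alone (indeed for $m=3$ it is precisely the surviving triple Massey product of Section~\ref{non-formal-sasakian}). The point where the hypothesis $m\ge4$ becomes essential is that the defining system now contains intermediate terms $a_{i,j}$ with $2\le j-i\le m-2$, which may be altered by closed elements; because every relevant product is divisible by $\omega$ in this degree range (hard Lefschetz surjectivity), such alterations propagate without obstruction and enlarge the indeterminacy of the Massey product, whereas for $m=3$ no such intermediate terms exist. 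I would then show that this enlarged indeterminacy contains $R_y\,y$, so that $0$ again lies in the product. Controlling this propagation, and verifying the indeterminacy is large enough, is the technical heart of the proof; organising it as an induction on $m$ that reduces an order-$m$ product to order-$(m-1)$ products is the natural strategy, with the base case handled by the explicit bookkeeping above.
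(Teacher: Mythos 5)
Your setup is correct (the decomposition $u+vy$, the two consequences of hard Lefschetz, the observation that closed elements of degree $\le n$ are pure and those of degree $\ge n+1$ are represented by $y$-multiples), and your low-degree case is plausible, though organised differently from the paper: you split by the total degree $D$ of the product and pass to $\mathcal{A}/(\omega)$, whereas the paper splits according to whether some single $a_t$ has degree $\ge n+1$ (there is at most one such, since $D\le 2n+1$) and never leaves $\mathcal{A}\otimes\bigwedge(y)$. But the heart of the statement is the range you defer to "the technical heart of the proof", and there the proposal has a genuine gap: the claim that the surviving $y$-part $R_y\,y$ lies in an "enlarged indeterminacy", to be established by an induction on $m$ reducing to order-$(m-1)$ products, is not carried out and is not the right mechanism. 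The indeterminacy of a higher Massey product is not a subgroup one can easily enlarge and compute, and nothing in the proposal actually produces a defining system whose final representative is exact in the case $D\ge n+1$ (note also that your case $D\le n$ covers strictly less than the paper's "all $\deg a_i\le n$" case, since $D$ can be as large as $2n+1$ even when every $a_i$ is pure).

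The missing idea is an explicit \emph{sparse defining system}. Starting from any defining system $\gamma_{ij}=\alpha_{ij}+\beta_{ij}\,y$ (with the $a_i$ represented by pure elements $\alpha_i$, or by $\beta_t\,y$ for the at most one class of degree $\ge n+1$), the paper sets $\widetilde\gamma_{ii}=\gamma_{ii}$, $\widetilde\gamma_{i,i+1}=\beta_{i,i+1}\,y$ (discarding the pure part), and $\widetilde\gamma_{ij}=0$ for all $j\ge i+2$ (with $\widetilde\gamma_{t-1,t}=\widetilde\gamma_{t,t+1}=0$ in the high-degree case). The component form of the original defining equations shows this is again a defining system, and then every term of $\sum_{k}\overline{\widetilde\gamma_{1k}}\cdot\widetilde\gamma_{k+1,m}$ vanishes identically: each term either contains a factor $\widetilde\gamma_{ij}$ with $j-i\ge 2$, which is zero, or is a product of two multiples of $y$, which is zero because $y^2=0$. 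This is exactly where $m\ge 4$ enters (for $m=3$ the terms $\overline{\gamma_{11}}\gamma_{23}$ and $\overline{\gamma_{12}}\gamma_{33}$ cannot be killed this way), it requires no analysis of indeterminacy and no induction on $m$, and it handles all degrees uniformly. Without this (or an equivalent) construction, your proof does not close.
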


 \begin{proof} 
Since $\mathcal{A}\,=\,\bigoplus_{i=0}^{2n} A^i$ satisfies the hard
Lefschetz property with respect to $\omega$, the map
 $$
 L_{\omega}^{n-p} \,\colon\, A^{p} \,\longrightarrow\, A^{2n-p}\, ,\ \
\alpha\,\longmapsto\, \alpha\cdot\omega^{n-p}
$$
is an isomorphism for all $p \,\leq\, n-1$. 
Let $a\,\in\, H^k(\mathcal{A}\otimes \bigwedge (y),d)$. Then
$$a\,=\,[\alpha+\beta\cdot y]\, ,$$ where
$\alpha, \beta\,\in\, \mathcal{A}$ and $d(\alpha+\beta\cdot y)\,=\,\overline{\beta}\cdot\omega
\,=\,0$. Here, $\overline{\beta}$ stands for $(-1)^{\deg \beta}\beta$, and so on.
If $\deg a\,\leq\, n$, then $\deg\beta\,\leq\, n-1$.
 By the injectivity of the map $L_\omega\,\colon\, A^{p} \,\longrightarrow \, A^{p+2}$ for
 $p\,\leq\, n-1$, we have $\beta\,=\,0$. Thus we have $a\,=\,[\alpha]$.
However, if $\deg a\,\geq\, n+1$, then $\deg \alpha\,\geq \, n+1$.
By the surjectivity of
$$L_\omega\,\colon\, A^{q-2} \,\longrightarrow \, A^{q}$$ for $q \geq n+1$,
 we have that $\alpha\,=\,\gamma\cdot\omega\,=\,d(\overline{\gamma}\cdot y)$. Hence
$a\,=\,[\beta\cdot y]$.

Let us consider a higher order Massey product $$\langle a_1,\cdots, a_m\rangle\, ,$$ 
where $a_i\,\in\, H^*(\mathcal{A}\otimes \bigwedge (y),\,d)$ and $m\,\geq\, 4$.
 The degree of the Massey product $\langle a_1\, ,\cdots\, , a_m\rangle$ is
$(\sum_{i=1}^{m} \deg a_i)-m+2$. As 
 $$(\sum_{i=1}^{m} \deg a_i)-m+2\,\leq\, 2n+1\, ,$$
it follows that there is at most one $a_i$ with $\deg a_i\,\geq\, n+1$.

Suppose that the higher Massey product $\langle a_1,\cdots, a_m\rangle$ is defined, and all 
$\deg a_i\,\leq\, n$. Set $a_i\,=\,[\alpha_i]$. According to Section \ref{formal 
manifolds}, any element $b\,\in\, \langle a_1,\cdots, a_m\rangle$ is a cohomology class
in $H^*(\mathcal{A}\otimes \bigwedge (y),\, d)$, which is obtained by taking 
$$\gamma_{ij}\,=\,\alpha_{ij}+\beta_{ij}\cdot y\in \mathcal{A}\otimes \bigwedge (y)\, ,$$
where $1\,\leq \,i\,\leq\, j \,\leq\, m$ and $(i, j)\not\,=\,(1,m)$, such that 
 $\gamma_{ii}\,=\,\alpha_{ii}\,=\,\alpha_i$ and
\begin{equation} \label{eqn:1}
d\gamma_{ij}\,=\,\sum_{i\leq k\leq j-1} \overline{\gamma_{ik}}\cdot\gamma_{k+1,j}\, .
\end{equation}
(Here it is used that we can fix the representatives $\alpha_i$ of the cohomology
classes $a_i$ \cite{BT}.)
Then, we have
 $$
b\,=\,\Big[\sum_{1\leq k\leq m-1} \overline{\gamma_{1k}}\cdot\gamma_{k+1,m}\Big]\, .
 $$
Note that the equations in \eqref{eqn:1} are equivalent to the following
\begin{equation} \label{eqn:2}
\overline{\beta_{ij}}\cdot\omega\,=\, \sum_{i\leq k\leq j-1} \overline{\alpha_{ik}}\cdot\alpha_{k+1,j} \quad \text{and} \quad 
0\,=\,\sum_{i\leq k\leq j-1} (\overline{\alpha_{ik}}\cdot\beta_{k+1,j}-\overline{\beta_{ik}}\cdot\overline{\alpha_{k+1,j}})\, .
\end{equation}

Now, we choose 
$$\widetilde\gamma_{ij}\,=\,\widetilde\alpha_{ij}+\widetilde\beta_{ij}\cdot\, y\,\in\, 
\mathcal{A}\otimes \bigwedge (y)\, ,\ \ 1\,\leq\, i\,\leq\, j \,\leq\, m\, ,
\ \ (i, j)\,\not=\, \,(1,m)\, ,$$ as follows:
$$
 \widetilde\gamma_{ii}\,=\,\widetilde\alpha_{ii}\,=\,\alpha_i\, , \ \
\widetilde\gamma_{ij}\,=\,\widetilde\beta_{ij}\cdot y\,=\,\beta_{ij}\cdot y \ \ 
\text{for} \ \ j\,=\,i+1\, , \ \ \text{and} \ \ \widetilde\gamma_{ij}\,=\,0 \ \  \text{for}
\ \ j\,\geq\, i+2\, .
 $$ 
 Then, using \eqref{eqn:2}, one can check that
 $$
 \overline{\widetilde\beta_{ij}}\cdot\omega\,=\, \sum_{i\leq k\leq j-1} \overline{\widetilde\alpha_{ik}}\cdot\widetilde\alpha_{k+1,j} \qquad 
 \text{and} \qquad 
 0\,=\,\sum_{i\leq k\leq j-1}(\overline{\widetilde\alpha_{ik}}\cdot\widetilde\beta_{k+1,j}-\overline{\widetilde\beta_{ik}}\cdot\overline{\widetilde\alpha_{k+1,j}}).
 $$
This means that the elements $\widetilde\gamma_{ij}$ satisfy
$$d{\widetilde\gamma}_{ij}\,=\,\sum_{i\leq k\leq j-1} 
\overline{{\widetilde\gamma}_{ik}}\cdot{\widetilde\gamma}_{k+1,j}\, ,$$
and so they define an element $\widetilde b\,\in\,\langle a_1,\cdots, a_m\rangle$ given by 
$$
{\widetilde b}\,=\, [\sum_{1\leq k \leq m-1}
 \overline{{\widetilde\gamma}_{1k}}\cdot\widetilde\gamma_{k+1,m}]\, ,
 $$
which is the zero cohomology class because every term of 
$\sum_{1\leq k \leq m-1} \overline{{\widetilde\gamma}_{1k}}\cdot\widetilde\gamma_{k+1,m}$ vanishes.
 
To complete the proof of the proposition, suppose that the higher Massey product $\langle a_1,\cdots, a_m\rangle$
is defined but there is one cohomology class $a_t$ with
$1\,\leq\, t \,\leq \,m$ and $\deg a_t\,\geq\, n+1$. Then
 $a_i\,=\,[\alpha_i]$ for $i\,\neq \,t$,
 and $a_t\,=\,[\beta_t\cdot y]$. So a cohomology class $b\,\in\,
\langle a_1\, ,\cdots\, , a_m\rangle$ is given by taking 
 $\gamma_{ij}\,=\,\alpha_{ij}+\beta_{ij}\cdot y$, where
 $1\,\leq\, i\,\leq\, j \,\leq\, m$ and $(i, j)\,\neq \,(1, m)$, such that 
 $$
 \gamma_{ii}\,=\,\alpha_{ii}\,=\,\alpha_i, \quad \text{for} \quad i\neq t, \qquad \gamma_{tt}\,=\,\beta_{tt}\cdot y\,=\,\beta_t\cdot y\, , \quad 
 \text{and}  \quad d\gamma_{ij}\,=\,\sum_{i\leq k\leq j-1}
\overline{\gamma_{ik}}\cdot\gamma_{k+1,j}\, .
 $$
 Thus, we have
$b\,=\,[\sum_{1\leq k\leq m-1} \overline{\gamma_{1k}}\cdot\gamma_{k+1,m}]$. Again,
$$d\gamma_{ij}\,=\,\sum_{i\leq k\leq j-1} \overline{\gamma_{ik}}\cdot\gamma_{k+1,j}$$ are
equivalent to
$$\overline{\beta_{ij}}\cdot\omega\,=\,\sum_{i\leq k\leq j-1}
\overline{\alpha_{ik}}\cdot\alpha_{k+1,j}\ \ \text{ and }\ \ 
0\,=\,\sum_{i\leq k\leq j-1} (\overline{\alpha_{ik}}\cdot\beta_{k+1,j}-
\overline{\beta_{ik}}\cdot\overline{\alpha_{k+1,j}})\, .$$
In particular, 
 $$
 \beta_{t-1,t}\cdot\omega\,=\,\beta_{t,t+1}\cdot\omega\,=\,0 \qquad \text{and}   \qquad  
\alpha_{t-1}\cdot \beta_t \,=\, 0\, ,
 $$
since $\alpha_{t}=0\,=\,\beta_{t-1}$. 
 
Now, we consider the elements $\widetilde\gamma_{ij}=\widetilde\alpha_{ij}+\widetilde\beta_{ij}\cdot\, y\in \mathcal{A}\otimes \bigwedge (y)$, $1\leq i\leq j \leq m$ and $(i, j)\not=(1,m)$, given by
 \begin{equation*}
\begin{aligned}
\widetilde\gamma_{ii}&=\widetilde\alpha_{ii}=\alpha_i,  \quad \text{for} \quad  i\neq t, \quad \qquad \widetilde\gamma_{tt}=\widetilde\beta_{tt}\cdot y=\beta_{t}\cdot y, \\
\widetilde\gamma_{ij}&=\widetilde\beta_{ij}\cdot\, y=\beta_{ij}\cdot\, y, \quad \text{for} \quad j=i+1\quad  \text{and}\quad i\neq t-1, t, \\
\widetilde\gamma_{t-1,t}&=\widetilde\gamma_{t,t+1}=0, \\
\widetilde\gamma_{i,j}&=0, \quad \text{for} \quad j\geq i+2.
  \end{aligned}
 \end{equation*}
Then,
 $\overline{\widetilde\beta_{ij}}\cdot\omega= \sum_{i \leq k \leq j-1}
 \overline{\widetilde\alpha_{ik}}\cdot\widetilde\alpha_{k+1,j}$ and
 $0=\sum_{i\leq k \leq j-1}
 (\overline{\widetilde\alpha_{ik}}\cdot\widetilde\beta_{k+1,j}-\overline{\widetilde\beta_{ik}}\cdot\overline{\widetilde\alpha_{k+1,j}})$.
 This defines the element 
 $$
 \widetilde b=[\sum_{1 \leq k\leq m-1}
 \overline{\widetilde\gamma_{1k}}\cdot\widetilde\gamma_{k+1,m}],
 $$
which is the zero cohomology class because each term of 
$\sum_{1 \leq k\leq m-1}\overline{\widetilde\gamma_{1k}}\cdot\widetilde\gamma_{k+1,m}$ vanishes.
\end{proof}

\begin{remark}\label{ref-rem}
It should be mentioned that Proposition \ref{prop:Leschetz-higher-massey} is false without 
the assumption of the hard Lefschetz property for $\mathcal{A}$. This was realized after
the referee produced a counterexample.
\end{remark}

\bigskip

\noindent{\bf Proof of Theorem~\ref{prop:higher-massey}~:} 
We know that Massey products on a manifold $M$
can be computed by using any model for $M$. Now, let $(M,\eta\, , \xi\, , \phi\, ,g)$ be 
a compact Sasakian manifold of dimension $2n+1$.
Put $\mathcal{A}\,=\,H_{B}^{*}(M)$ the basic cohomology of $M$. From Theorem \ref{Tievsky model} we know that
$(\mathcal{A}\otimes\bigwedge(x),\, D)$, with $|x|\,=\,1$, $D(\mathcal{A})\,=\,0$ and $Dx\,=\,[d\eta]_{B}$,
is a model for $M$.

Since $(M,\eta\, , \xi\, , \phi\, ,g)$ is a Sasakian manifold, 
there is the orthogonal decomposition of the tangent bundle $TM$ of $M$
$$
TM = {\mathcal D} \oplus {\mathcal L}\, ,
$$
where ${\mathcal L}$ is the trivial line subbundle generated by $\xi$, and the transversal
foliation ${\mathcal D}$ is K\"ahler with respect to $\omega\,=\,[d\eta]_{B}\in H_{B}^{2}(M)$.
Then, we can apply El Kacimi's basic hard Lefschetz Theorem for transversally K\"ahler foliations $\mathcal{F}$
on a compact manifold $N$ such that $H_{B}^{cod\,\mathcal{F}}(N)\neq\,0$. (See \cite{El Kacimi-Alaoui}
for a more complete statement of the theorem.)
This implies that $(\mathcal{A}, 0)$ satisfies the hard Lefschetz property with respect to $\omega$ and so, by 
Proposition \ref{prop:Leschetz-higher-massey}, all higher Massey products 
of the model $(\mathcal{A}\otimes\bigwedge(x), D)$ are zero.
\hfill$\square$

\begin{proposition}\label{prop:Leschetz-a-massey} 
In the set-up of Proposition \ref{prop:Leschetz-higher-massey}, all $m^{th}$ order
$a$-Massey products 
$\la a; b_1,\cdots,b_m \ra$ of $(\mathcal{A}\otimes \bigwedge (y), d)$, where $m\,\geq\, 3$,
are zero.
\end{proposition}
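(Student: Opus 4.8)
The plan is to run the same strategy as in the proof of Proposition \ref{prop:Leschetz-higher-massey}, using the two structural consequences of hard Lefschetz for the elementary extension $(\mathcal{A}\otimes\bigwedge(y),\,d)$, but the argument is shorter. First I would pin down the degree in which the product lives: by definition $\la a;b_1,\cdots,b_m\ra\subset H^q(\mathcal{A}\otimes\bigwedge(y),d)$ with $q=(m-1)\deg a+(\sum_i \deg b_i)-m+1$. If $\deg a\geq n+1$, then since $a$ enters with multiplicity $m-1\geq 2$ and each $\deg b_i\geq 1$, one gets $q\geq 2(n+1)+1=2n+3>2n+1$; as the algebra vanishes above degree $2n+1$, the target group is zero and there is nothing to prove. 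Hence I may assume $\deg a\leq n$, and then the computation in the proof of Proposition \ref{prop:Leschetz-higher-massey} shows that $a=[\alpha]$ for some $\alpha\in\mathcal{A}$ (its $y$-component is killed by injectivity of $L_\omega$ in degrees $\leq n-1$).

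The key observation is that, with such a representative $\alpha\in\mathcal{A}$ for $a$, every $\xi_i$ can be chosen to be a multiple of $y$. Writing a representative of $b_i$ as $B_i^{(0)}+B_i^{(1)}\cdot y$ with $B_i^{(0)},B_i^{(1)}\in\mathcal{A}$, the cochain $a\cdot b_i=\alpha B_i^{(0)}+\alpha B_i^{(1)}\cdot y$ must be exact. Since $d(\xi_i^{(0)}+\xi_i^{(1)}\cdot y)=\overline{\xi_i^{(1)}}\cdot\omega$ always lands in $\mathcal{A}$ and never carries a $y$-component, exactness forces $\alpha B_i^{(1)}=0$ and provides $\xi_i^{(1)}\in\mathcal{A}$ with $\overline{\xi_i^{(1)}}\cdot\omega=\alpha B_i^{(0)}$. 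I would then set $\xi_i:=\xi_i^{(1)}\cdot y$, discarding the free $\mathcal{A}$-part $\xi_i^{(0)}$; this still satisfies $d\xi_i=a\cdot b_i$.

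Finally I would evaluate the defining expression on this system. The $i$-th summand
$$\pm\,\xi_1\cdots\xi_{i-1}\cdot b_i\cdot\xi_{i+1}\cdots\xi_m$$
contains the $m-1$ factors $\xi_j$ with $j\neq i$, each a multiple of $y$; collecting them produces, up to sign, a factor $y^{\,m-1}$. Because $\deg y=1$ is odd we have $y^2=0$, and since $m\geq 3$ gives $m-1\geq 2$, every summand vanishes identically as a cochain. Thus the representative of $\la a;b_1,\cdots,b_m\ra$ obtained from this system is $0$, so $0\in\la a;b_1,\cdots,b_m\ra$; by Lemma \ref{lem:cohomology a b} this conclusion does not depend on the chosen representatives.

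I expect the only delicate point to be the bookkeeping that lets me simultaneously pass to a representative $\alpha\in\mathcal{A}$ for $a$ and strip the $\mathcal{A}$-parts off the $\xi_i$; once both reductions are in place the vanishing is forced purely by $y^2=0$ together with the hypothesis $m\geq 3$. This also explains why the argument is sharp: for $m=2$ one has $m-1=1$, no repeated factor of $y$ appears, and indeed the triple-type products on Sasakian manifolds need not vanish, as shown in Section \ref{non-formal-sasakian}.
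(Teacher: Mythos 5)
Your argument is correct, and it streamlines the paper's proof in a way worth noting. The paper first uses hard Lefschetz to normalize the representatives of the $b_i$ as well as of $a$, splitting into two cases according to whether some $b_t$ has degree $\geq n+1$ (in which case $b_t=\beta_t\cdot y$ and one must separately verify $a\cdot\beta_t=0$ so that $\widetilde\xi_t=0$ is admissible); in each case it then discards the $\mathcal{A}$-parts $\mu_i$ of the $\xi_i$ (legitimate since $d\mu_i=0$) and concludes from $y^2=0$ together with $m-1\geq 2$. You use hard Lefschetz only once, to put the even-degree class $a$ in $\mathcal{A}$ via injectivity of $L_\omega$ in degrees $\leq n-1$ (the case $\deg a\geq n+1$ being vacuous by your degree count); after that, the observation that the image of $d$ lies in $\mathcal{A}$ forces $\alpha B_i^{(1)}=0$ and lets you take every $\xi_i$ proportional to $y$ with no case distinction and no normalization of the $b_i$ at all. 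The final vanishing step is identical in both proofs. Your route buys a shorter argument and makes visible that the Lefschetz hypothesis is only needed to control the class $a$; the paper's route is more uniform with its treatment of the higher Massey products in Proposition \ref{prop:Leschetz-higher-massey}, where the analogous case analysis is genuinely needed. The one point to state explicitly is the appeal to Lemma \ref{lem:cohomology a b} at the moment you replace $a$ by the representative $\alpha\in\mathcal{A}$ (you do invoke it, correctly, at the end); with that in place the proof is complete.
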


\begin{proof} 
Suppose that the $a$-Massey product $\la a; b_1,\cdots,b_m \ra$ is defined, where $a$ and $b_i$ are closed elements 
in $(\mathcal{A}\otimes \bigwedge (y), d)$, $\deg a =$ even and $m\geq 3$. Then, the degree of $\la a; b_1,\cdots,b_m \ra$
 is 
 $$
 \deg \la a; b_1,\cdots,b_m \ra\,=\, (m-1)\deg a +(\sum_{i=1}^{m} \deg b_i)-m+1\leq 2n+1.
 $$
This implies $\deg a \leq n$ and there is at most one $b_i$ with $\deg b_i\geq n+1$.

By Lemma \ref{lem:cohomology a b}, 
the $a$-Massey product $\la a;b_1,\cdots,b_m\ra$ only depends on the
cohomology classes $[a]$, $[b_i]$ and not on the particular
elements representing these classes.

If all $\deg b_i\leq n$, then the Lefschetz property for $(\mathcal{A}, \, \omega)$ implies
that not only $a\,\in\, \mathcal{A}$ but also $b_i\,\in\, \mathcal{A}$,
for all $1\,\leq\, i \,\leq\, m$. 
According to section \ref{formal manifolds}, an element $c\in \la a; b_1,\cdots,b_m \ra$ is
a cohomology class 
in $H^*(\mathcal{A}\otimes \bigwedge (y),d)$, which is given by taking 
 $\xi_i\,=\,\mu_{i}+\nu_{i}\cdot y\in \mathcal{A}\otimes \bigwedge (y)$, where $1\,\leq\, i
\,\leq\, m$, such that 
 $d\xi_i \,=\, a\cdot b_i$.
 Then, we have
 $$
c\,=\,\Big[\sum_i (-1)^{|\xi_1|+\ldots + |\xi_{i-1}|}\,
{\xi_1}\cdot\ldots \cdot{\xi_{i-1}} \cdot b_i
\cdot \xi_{i+1}\cdot \cdots\cdot \xi_m\Big]\, .
 $$

Now, from the elements $\xi_i$, we choose $\widetilde\xi_i\,=\,\widetilde\mu_{i}+
\widetilde\nu_{i}\cdot\, y\in \mathcal{A}\otimes \bigwedge (y)$, $1\,\leq\, i \,\leq\, m$,
given by
$$
\widetilde\xi_i\,=\,\widetilde\nu_{i}\cdot\, y\,=\,\nu_{i}\cdot\, y\, .
 $$ 
Since $\mu_i\in\mathcal{A}$, we have ${d}\mu_i\,=\,0$. 
So ${d}\widetilde{\xi}_i\,=\,d(\nu_i\cdot y)\,=\,{d}\xi_i\,=\,a\cdot b_i$.
This means that the elements $\widetilde{\xi}_i$ define the element
$$
 \widetilde c\,=\, \Big[\sum_i (-1)^{|\xi_1|+\ldots + |\xi_{i-1}|}\,
{\widetilde\xi_1}\cdot\ldots \cdot{\widetilde\xi_{i-1}} \cdot b_i
\cdot \widetilde\xi_{i+1}\cdot \ldots\cdot \widetilde\xi_m\Big]\,=\,0,
 $$
because each term of 
 $\sum_i (-1)^{|\xi_1|+\ldots + |\xi_{i-1}|}\,{\widetilde\xi_1}\cdot\ldots \cdot{\widetilde\xi_{i-1}} \cdot b_i
\cdot {\widetilde\xi_{i+1}}\cdot \ldots\cdot {\widetilde\xi_m}$ vanishes.

Suppose that $\la a;b_1,\cdots,b_m\ra$ is defined
but there is one $b_t$ $(1\leq t \leq m)$ with $\deg b_t\geq n+1$. Thus,
 $a, b_i\in {\mathcal A}$, for $i\neq t$,
 and $b_t\,=\,\beta_t\cdot y$. So a cohomology class $c\in \la a;b_1,\cdots,b_m\ra$ is given by taking 
 $\xi_i\,=\,\mu_{i}+\nu_{i}\cdot y\in \mathcal{A}\otimes \bigwedge (y)$, where $1\leq i \leq m$, such that 
 $d\xi_i \,=\, a\cdot b_i$. Then, 
 $$
c=\Big[\sum_i (-1)^{|\xi_1|+\ldots + |\xi_{i-1}|}\,
{\xi_1}\cdot\ldots \cdot{\xi_{i-1}} \cdot b_i
\cdot \xi_{i+1}\cdot \ldots\cdot \xi_m\Big] .
 $$
We choose $\widetilde\xi_i\,=\,\widetilde\mu_{i}+\widetilde\nu_{i}\cdot\, y\in \mathcal{A}\otimes \bigwedge (y)$, $1\leq i \leq m$,
as follows
$$
\widetilde\xi_i\,=\,\widetilde\nu_{i}\cdot\, y\,=\,\nu_{i}\cdot\, y, \quad \text{for} \quad i\neq t, \quad \qquad \widetilde\xi_t\,=\,0.
 $$ 
 Hence, $\widetilde\xi_i$ satisfy $d\widetilde\xi_i \,=\, a\cdot b_i$. For $i \,\neq\, t$, we have 
 $$
 d\xi_{i}\,=\,d\nu_i\cdot y\,=\,a\cdot b_i\, ,
 $$ 
and for $i\, =\, t$ we have $d\widetilde\xi_t\,=\,0$, and 
 $$
 d\xi_t \,=\, a\cdot b_t \, =\, a\cdot \beta_t\cdot y\,.
$$
As the image of $d$ is contained in $\mathcal{A}$, it follows that $a\cdot \beta_t\, =\,0$ and hence
$a\cdot b_t\, =\, 0\,=\,d\widetilde\xi_t$. 
Therefore, the elements $\widetilde\xi_i$ define the cohomology class $\widetilde c
\,\in\,\la a; b_1,\cdots,b_m \ra$ given by $ \widetilde c\,
 =\, \Big[\sum_i (-1)^{|\xi_1|+\ldots + |\xi_{i-1}|}\,
{\widetilde\xi_1}\cdot\ldots \cdot{\widetilde\xi_{i-1}} \cdot b_i
\cdot \widetilde\xi_{i+1}\cdot \ldots\cdot \widetilde\xi_m\Big]\,=\,0$.
\end{proof}

\begin{proposition}\label{prop:a-massey} 
All $a$-Massey products of a compact Sasakian manifold are zero.
\end{proposition}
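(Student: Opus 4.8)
The plan is to mirror exactly the proof of Theorem~\ref{prop:higher-massey}, since the companion result Proposition~\ref{prop:Leschetz-a-massey} already supplies the $a$-Massey analogue of Proposition~\ref{prop:Leschetz-higher-massey}. The entire argument therefore reduces to checking that a suitable model of the Sasakian manifold satisfies the hypotheses of Proposition~\ref{prop:Leschetz-a-massey}; no new computation with the explicit formulas for $a$-Massey products is needed, as that work is done in Proposition~\ref{prop:Leschetz-a-massey}.

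First I would record the invariance principle: just as for ordinary and higher Massey products, the $a$-Massey products of $M$ may be computed from any model of $M$ rather than from the full de Rham complex $(\Omega^*(M),d)$. By Lemma~\ref{lem:cohomology a b} they depend only on the cohomology classes $[a]$ and $[b_i]$, and they are preserved under the quasi-isomorphisms relating different models of $M$. Hence it is legitimate to pass to Tievsky's model.

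Next, let $(M,\eta,\xi,\phi,g)$ be a compact Sasakian manifold of dimension $2n+1$ and set $\mathcal{A}\,=\,H_{B}^{*}(M)$. By Theorem~\ref{Tievsky model}, the elementary extension $(\mathcal{A}\otimes\bigwedge(x),\,D)$, with $|x|\,=\,1$, $D(\mathcal{A})\,=\,0$ and $Dx\,=\,[d\eta]_{B}\,=:\,\omega$, is a model for $M$. The transversal foliation $\mathcal{D}$ of the Sasakian structure is transversally K\"ahler with respect to $\omega\,\in\, H_{B}^{2}(M)$, so El Kacimi's basic hard Lefschetz theorem for transversally K\"ahler foliations applies and shows that $(\mathcal{A},\,0)$ satisfies the hard Lefschetz property with respect to $\omega$. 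With $\mathcal{A}\,=\,\bigoplus_{i=0}^{2n}A^i$ and $\omega\,\in\, A^2$ nondegenerate, this is precisely the set-up of Proposition~\ref{prop:Leschetz-a-massey}.

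Finally I would invoke Proposition~\ref{prop:Leschetz-a-massey} directly: all $m^{th}$ order $a$-Massey products of $(\mathcal{A}\otimes\bigwedge(x),\,D)$ with $m\,\geq\, 3$ vanish, and hence so do all $a$-Massey products of $M$. The only point requiring care, and the main (mild) obstacle, is the invariance step of the second paragraph: one must confirm that the $a$-Massey product, like the higher Massey products, is an invariant of the quasi-isomorphism type of the DGA, so that computing it in the Tievsky model is justified. One must also note that degree considerations are compatible, namely that $\deg a$ even and the constraint $\deg\la a;b_1,\cdots,b_m\ra\,\leq\, 2n+1$ force $\deg a\,\leq\, n$ with at most one $b_i$ of degree $\geq n+1$, exactly as used in Proposition~\ref{prop:Leschetz-a-massey}. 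Once these are granted the conclusion is immediate.
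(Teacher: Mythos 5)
Your proposal is correct and follows essentially the same route as the paper: pass to Tievsky's model $(H_B^*(M)\otimes\bigwedge(x),D)$, use El Kacimi's basic hard Lefschetz theorem to verify the hypothesis on $(\mathcal{A},0)$, and then invoke Proposition~\ref{prop:Leschetz-a-massey}. The paper's own proof is just a terser version of this, with the invariance of $a$-Massey products under change of model taken as understood.
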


\begin{proof}
Let $(M,\eta\, , \xi\, , \phi\, ,g)$ be a compact Sasakian manifold.
As we noticed in the proof of Theorem \ref{prop:higher-massey}, the differential graded commutative algebra 
$(\mathcal{A}\,=\,H_{B}^{*}(M), 0)$, where $\omega\,=\,[d\eta]_{B}\in H_{B}^{2}(M)$, satisfies the hard Lefschetz property with respect to
$\omega$. Then, Proposition \ref{prop:Leschetz-a-massey} implies that all $a$ Massey products 
of the model $(\mathcal{A}\otimes\bigwedge(x), D)$ for $M$ are zero.
\end{proof}

\begin{theorem}\label{thm:k-contact-ns}
Let $M$ be a simply connected compact symplectic manifold of dimension $2k$
with an integral symplectic form $\omega$. Assume that the
quadruple Massey product in $H^*(M)$ is non-zero. There exists a sphere bundle 
 $$
 S^{2m+1}\,\lrightarrow\, E\,\lrightarrow\, M\, ,
 $$
for $m+1\,>\,k$, such that the total space $E$ is $K$-contact, but $E$ does not
admit any Sasakian structure.
\end{theorem}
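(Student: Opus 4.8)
The plan is to build $E$ as a sphere bundle over $M$ using the contact fatness machinery (Theorem~\ref{thm:lerman-contact}), arrange for $E$ to be $K$-contact by construction, and then obstruct the Sasakian property by showing that the non-zero quadruple Massey product on $M$ survives in $H^*(E)$. Since Theorem~\ref{prop:higher-massey} says all higher Massey products vanish on any compact Sasakian manifold, a surviving non-zero quadruple Massey product on $E$ rules out a Sasakian structure. The key is to make the bundle projection $p\colon E\to M$ behave well enough on cohomology that a non-zero quadruple Massey product $\langle a_1,a_2,a_3,a_4\rangle\subset H^*(M)$ pulls back to a non-zero quadruple Massey product in $H^*(E)$.

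First I would produce the $K$-contact total space. Because $[\omega]\in H^2(M,\mathbb{Z})$ is integral, we have a Boothby-Wang fibration $S^1\to P\to M$ as in Theorem~\ref{thm:Boothby-Wang}. Take as fiber a standard $K$-contact odd sphere $(S^{2m+1},\eta)$ with an $S^1$-action preserving $\eta$ (for instance the Hopf/weighted-Hopf action), and form the associated bundle $E=P\times^{S^1}S^{2m+1}$. By the second part of Theorem~\ref{thm:Boothby-Wang}, $E$ carries a $K$-contact structure; this is a genuine sphere bundle $S^{2m+1}\to E\to M$. I would choose $m$ with $m+1>k$ so that the fiber dimension exceeds the base dimension, which is exactly the hypothesis that forces the bundle to cohomologically split in the range where the Massey product lives.

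Next I would analyse $H^*(E)$ via the Leray-Serre spectral sequence of $S^{2m+1}\to E\to M$. The fiber has cohomology $H^*(S^{2m+1})=\mathbb{R}\oplus\mathbb{R}\cdot u$ with $\deg u=2m+1$. The only possibly non-trivial differential is the transgression $d_{2m+2}(u)\in H^{2m+2}(M)$, the Euler class of the bundle. The quadruple Massey product $\langle a_1,a_2,a_3,a_4\rangle$ lives in total degree $\deg a_1+\dots+\deg a_4-2\leq 4\cdot 2k-2$, but crucially in a range well below $2m+1$ once $m$ is taken large, so all the relevant classes $a_i$, the defining cochains $a_{i,j}$, and the Massey product itself sit in the subalgebra $p^*H^*(M)$, where $p^*\colon H^*(M)\hookrightarrow H^*(E)$ is injective in low degrees. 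Since Massey products are natural under the DGA morphism $p^*\colon (\Omega^*(M),d)\to(\Omega^*(E),d)$, the relations $a_ia_{i+1}=da_{i,i+1}$ on $M$ pull back verbatim, so $p^*\langle a_1,a_2,a_3,a_4\rangle\subset\langle p^*a_1,\dots,p^*a_4\rangle$ in $H^*(E)$.

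\textbf{The main obstacle} will be controlling the indeterminacy: a Massey product is only non-zero if $0$ is not in the coset, and the indeterminacy subgroup on $E$ could a priori be larger than on $M$, swallowing the non-zero representative. To handle this I would verify that in the degree range fixed by $m+1>k$ the pullback $p^*$ is an isomorphism in all degrees that contribute to the product and its indeterminacy (degrees up to roughly $8k-2$, while $H^j(E)\cong H^j(M)\oplus H^{j-(2m+1)}(M)$ and the second summand vanishes for $j<2m+1$). Then the indeterminacy ideal $a_1\cdot H^*(E)+a_4\cdot H^*(E)$ maps isomorphically from its counterpart on $M$, so a class that is non-zero modulo indeterminacy on $M$ stays non-zero modulo indeterminacy on $E$. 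Hence $\langle p^*a_1,\dots,p^*a_4\rangle$ is a non-zero quadruple Massey product on $E$, which by Theorem~\ref{prop:higher-massey} is incompatible with any Sasakian structure, while $E$ is $K$-contact by Theorem~\ref{thm:Boothby-Wang}. The remaining routine check is that $E$ is simply connected, which follows from $\pi_1(M)=0$ and the long exact homotopy sequence of the fibration since the fiber $S^{2m+1}$ is simply connected.
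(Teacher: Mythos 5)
Your construction of $E$ and your overall strategy coincide with the paper's: form the Boothby--Wang fibration $S^1\to P\to M$, take the associated bundle $E=P\times^{S^1}S^{2m+1}$, obtain the $K$-contact structure from Theorem \ref{thm:Boothby-Wang}, and obstruct a Sasakian structure via Theorem \ref{prop:higher-massey}. The gap is in the final step, where you must show not merely that $\langle p^*a_1,\dots,p^*a_4\rangle$ contains a non-zero element but that $0\notin\langle p^*a_1,\dots,p^*a_4\rangle$. Naturality gives $p^*\langle a_1,\dots,a_4\rangle\subset\langle p^*a_1,\dots,p^*a_4\rangle$, but the right-hand set is computed from \emph{all} defining systems of cochains on $E$, and these are not controlled by knowing that $p^*$ is an isomorphism on cohomology in a range of degrees: a defining system on $E$ is cochain-level data and need not come from $M$ in any way visible in $H^*$. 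Moreover, the indeterminacy of a quadruple Massey product is not the ideal $[a_1]\cdot H^*+[a_4]\cdot H^*$ --- that description is specific to triple products; for $t\geq 4$ the set $\langle a_1,\dots,a_t\rangle$ is in general not even a coset of a subgroup, so ``non-zero modulo indeterminacy'' cannot be transported by the isomorphism of quotients you propose.

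The paper closes exactly this gap at the DGA level rather than the cohomology level. Since the Euler class of the associated rank-$(2m+2)$ vector bundle lies in $H^{2m+2}(M)=0$ (because $2m+2>2k$), the relative model $(\mathcal{A}_{PL}(M)\otimes\bigwedge(z),D)$ of the sphere bundle has $Dz=0$, so $\mathcal{A}_{PL}(E)$ is weakly equivalent to $(\mathcal{A}_{PL}(M),d)\otimes(\bigwedge(z),0)$. Massey products on $E$ may then be computed in this model, where setting $z=0$ is a DGA morphism onto $(\mathcal{A}_{PL}(M),d)$ that is injective on cohomology in degrees below $2m+1$; it carries every defining system on $E$ for classes in degrees $\leq 2k$ to a defining system on $M$, and hence identifies $\langle p^*a_1,\dots,p^*a_4\rangle$ with $\langle a_1,\dots,a_4\rangle$. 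Your Serre spectral sequence computation is consistent with this but recovers only the additive structure of $H^*(E)$; you need the multiplicative, cochain-level splitting to conclude. With that replacement, the rest of your argument (the $K$-contact structure, the simple connectivity of $E$, and the appeal to Theorem \ref{prop:higher-massey}) goes through as in the paper.
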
 

\begin{proof}
Let
$$S^1\,\lrightarrow\, P\,\lrightarrow\, M$$
be the principal $S^1$-bundle corresponding to $[\omega]\,\in\, H^2(M,\,\mathbb{Z})$.
Choose a unitary representation of $S^1$ in $\mathbb{C}^{m+1}$
whose all weights are positive. Consider the $S^{2m+1}$-bundle
$$S^{2m+1}\, \lrightarrow\, E\,:=\, P\times^{S^1}S^{2m+1}\,\lrightarrow\, M$$
associated to the principal $S^1$-bundle $P$ for this unitary representation.
By Theorem \ref{thm:Boothby-Wang} applied to $S^{2m+1}$, we obtain a fiberwise
$K$-contact structure on the total space $E$ (see also \cite{HT}).
Clearly, there is an algebraic model of $E$ of the form
$$(\mathcal{A}_{PL}(M),\,d)\,\lrightarrow \,(\mathcal{A}_{PL}\otimes\bigwedge (z),\,D)
\,\lrightarrow\, (\bigwedge (z),0)\, ,$$
where $(\bigwedge (z),0)$ is the minimal model of $S^{2m+1}$ with one odd generator
$z$ of degree $2m+1$. Note that by the degree reasons, $D(z)\,=\,0$. Indeed,
$D(z)$ represents
a cohomology class in $H^{2m+2}(M)$ (it is the Euler class of the corresponding
vector bundle). Since $2m+2\,>\,2k\,=\,\dim M$, this class has to be zero.
But this means that $\SA_{PL}(E)$ must be weakly equivalent $\SA_{PL}(M\times S^{2m+1})$
(see \cite[p. 202, Example 4]{FHT}). It now follows that 
$$(\mathcal{A}_{PL}(M)\otimes \bigwedge (z),\,D)\,\simeq\, (\mathcal{A}_{PL}(M),\,d)
\otimes (\bigwedge(z),0)\, ,$$
and the latter is a model of $E$. Assume now that $E$ is Sasakian. By Theorem
\ref{prop:higher-massey}, all higher order Massey products in $H^*(E)$ must be
zero. But this contradicts the assumption in the theorem because
$(\mathcal{A}_{PL}(M),\,d) \otimes (\bigwedge(z),0)$ is a model of $E$. Therefore,
$E$ is not Sasakian.
\end{proof}

\begin{remark}
Theorem \ref{thm:k-contact-ns} can also be proved using the
differential algebra $(\Omega^*(M),\,d)$ of the de Rham forms instead of
$\mathcal{A}_{PL}(M)$. We prefer the latter because it enables us to use citation
of \cite{FHT} directly.
\end{remark}

\begin{examples}\label{l.ex}
\mbox{}
\begin{enumerate}
\item It is proved in \cite{BT} that there exist simply connected compact 
symplectic manifolds with non-zero quadruple Massey products. Therefore, any 
such manifold $M$ is a base of some sphere bundle which is $K$-contact but 
non-Sasakian.
 
\item There exists an $8$-dimensional simply connected 
compact symplectic manifold 
$M$ with a non-zero triple $a$-Massey product. It was constructed in \cite{FM1}. 
One can easily modify Theorem \ref{thm:k-contact-ns}, assuming that the base $M$ 
is symplectic, simply connected and possesses a non-zero triple $a$-Massey 
product. In this way one obtains a $17$-dimensional $K$-contact non-Sasakian simply 
connected compact manifold. More such manifolds can be constructed using results of
\cite{CFM}, since the property of having non-zero $a$-Massey products is 
inherited by symplectic blow-ups and symplectic resolutions.
\end{enumerate}
\end{examples}

\section{$K$-contact and Sasakian groups} \label{fund-group}

Following Boyer and Galicki \cite{BG} we will call a group $\Gamma$ to be
{\em $K$-contact} if it can be realized as the fundamental group of a 
compact $K$-contact manifold, and we will call $\Gamma$ to be a {\em 
Sasakian group} if there exists a compact Sasakian manifold $M$ with 
$\pi_1(M)\,\cong\,\Gamma$. In \cite{BG} the authors pose a problem of 
realizing finitely presented groups as fundamental groups of $K$-contact 
or Sasakian manifolds. Since Sasakian manifolds constitute an 
odd-dimensional counterpart of the class of K\"ahler manifolds, it is 
natural to expect that not all finitely presented groups are Sasakian.

On the other hand, we will show in this section that any finitely presented 
group is $K$-contact.

Using the analogy between K\"ahler and Sasakian 
manifolds, we ask the following question: {\it Can lattices in 
semisimple Lie groups be Sasakian?}

We show that the above question is related 
to the {\it orbifold fundamental groups} of K\"ahler orbifolds. Some 
restrictions of Sasakian groups were found in \cite{X}. In particular, it 
was proved that some lattices in semisimple Lie groups cannot be 
fundamental groups of regular Sasakian manifolds. Here we
strengthen this result showing that such groups cannot even be Sasakian.

\begin{proposition}\label{prop:fat} Let $X$ be any compact 
connected symplectic manifold. There exists a Boothby-Wang fibration
corresponding to some integral multiple
of the given symplectic form
$$
 S^1\,\lrightarrow\, P\,\lrightarrow\, X
 $$
such that the total space $M$ of the associated
fiber bundle
$$
 S^3\,\lrightarrow\, M\, :=\, P\times^{S^1}S^3\,\lrightarrow\, X
 $$
admits a $K$-contact structure, and also $\pi_1(M)\,\cong\,\pi_1(X)$.
\end{proposition}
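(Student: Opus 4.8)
The plan is to build the $K$-contact structure on $M$ in two stages, combining a Boothby-Wang fibration over $X$ with Lerman's associated-bundle construction (Theorem~\ref{thm:lerman-contact}), and then to verify the statement about fundamental groups via the homotopy exact sequence. First I would rescale the symplectic form: since $[\omega]\in H^2(X,\RR)$ need not be integral, I replace $\omega$ by a suitable integral multiple $\lambda\omega$ whose class lies in the image of $H^2(X,\ZZ)\too H^2(X,\RR)$; this is always possible after clearing denominators, and it does not affect the diffeomorphism type of the constructions that follow. With $[\lambda\omega]\in H^2(X,\ZZ)$ in hand, I take the principal circle bundle
$$
S^1\,\lrightarrow\, P\,\lrightarrow\, X
$$
with Euler class $[\lambda\omega]$. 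By the Boothby-Wang construction (\cite{BW, K}, as recalled before Theorem~\ref{thm:Boothby-Wang}), $P$ carries an $S^1$-invariant contact form $\theta$ that is a connection form with curvature $\pi^*(\lambda\omega)$, so this is a genuine Boothby-Wang fibration in the sense required.

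Next I would equip the fiber with an $S^1$-action. The round sphere $S^3\subset\CC^2$ carries its standard $K$-contact (indeed Sasakian) structure with contact form $\eta$, and the diagonal $S^1$-action on $\CC^2$ restricts to an $S^1$-action on $S^3$ preserving $\eta$. Applying Theorem~\ref{thm:Boothby-Wang} to $(F,\eta)=(S^3,\eta)$ with this $S^1$-action, the associated bundle
$$
S^3\,\lrightarrow\, M\,:=\,P\times^{S^1}S^3\,\lrightarrow\, X
$$
inherits a fiberwise contact form, and because $(S^3,\eta)$ is $K$-contact and the $S^1$-action preserves the $K$-contact structure, the theorem guarantees that the total space $M$ is $K$-contact. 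This is exactly the conclusion we want for the first assertion, and it relies on nothing beyond the already-stated Lerman machinery specialized to a circle.

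The fundamental-group claim is where I would take the most care, though I expect it to be routine rather than deep. The fiber $S^3$ is simply connected and $\pi_2(S^3)=0$, so the long exact homotopy sequence of the fibration $S^3\lrightarrow M\lrightarrow X$ reads
$$
\pi_2(S^3)\,\too\,\pi_2(M)\,\too\,\pi_2(X)\,\too\,\pi_1(S^3)\,\too\,\pi_1(M)\,\too\,\pi_1(X)\,\too\,\pi_0(S^3)\, .
$$
Since $\pi_1(S^3)=0=\pi_0(S^3)$, the tail
$$
0\,\too\,\pi_1(M)\,\too\,\pi_1(X)\,\too\,0
$$
gives the isomorphism $\pi_1(M)\cong\pi_1(X)$ immediately. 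The main obstacle, such as it is, lies in the rescaling step and in checking that the $S^1$-action on $S^3$ genuinely preserves the $K$-contact structure (not merely the contact form), so that the second half of Theorem~\ref{thm:lerman-contact} applies; once those are in place, the $K$-contact structure on $M$ and the computation of $\pi_1(M)$ both follow directly, completing the proof.
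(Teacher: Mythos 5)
Your proof is correct and follows essentially the same route as the paper: a Boothby--Wang fibration for an integral multiple of $\omega$, the Hopf $S^1$-action on the standard Sasakian $S^3$, and Lerman's associated-bundle theorem (Theorem \ref{thm:Boothby-Wang}) to produce the fiberwise $K$-contact structure, with the paper merely asserting $\pi_1(M)\cong\pi_1(X)$ where you spell out the homotopy exact sequence of the $S^3$-bundle. The only caveat, shared with the paper's own phrasing, is that ``clearing denominators'' presupposes $[\omega]$ is proportional to a rational class; in general one should first perturb $\omega$ within the open cone of symplectic forms so that its class becomes rational, and then rescale.
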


\begin{proof} 
We know that Boothby-Wang fibrations are
fat with the moment map having nonzero values. The image of the
moment map consists of fat vectors (see Theorem \ref{thm:Boothby-Wang}).
Moreover, $S^3$ is $K$-contact, and the Hopf $S^1$-action preserves
the $K$-contact structure; this is straightforward, but one can also use a
general description of $K$-contact
manifolds given in \cite[Chapter 7]{BG} or in \cite{BMS}.
By Lerman's Theorem \ref{thm:lerman-contact}, any Boothby-Wang fibration yields
an associated bundle whose total space 
$M\,=\,P\times^{S^1}S^3$ admits a $K$-contact structure as well. Clearly,
we have $\pi_1(M)\,\cong\,\pi_1(X)$.
\end{proof}

\begin{theorem}\label{thm:main}
Any finitely presented group is $K$-contact.
\end{theorem}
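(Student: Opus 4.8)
The plan is to reduce the statement to the symplectic setting and then feed the result into Proposition~\ref{prop:fat}. Let $\Gamma$ be an arbitrary finitely presented group. The single nontrivial ingredient I would invoke is the realization theorem of Gompf: every finitely presented group is the fundamental group of a closed symplectic four-manifold. Using it, I first fix a compact connected symplectic manifold $X$ with $\pi_1(X)\,\cong\,\Gamma$.

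Once $X$ is chosen, the conclusion follows immediately from the construction already carried out. Applying Proposition~\ref{prop:fat} to the compact connected symplectic manifold $X$, I obtain a Boothby-Wang fibration $S^1\lrightarrow P\lrightarrow X$, associated with some integral multiple of the symplectic form, together with the associated bundle
$$
S^3\,\lrightarrow\, M\,:=\,P\times^{S^1}S^3\,\lrightarrow\, X\, ,
$$
whose total space $M$ is a compact $K$-contact manifold satisfying $\pi_1(M)\,\cong\,\pi_1(X)$. Composing this with the isomorphism $\pi_1(X)\,\cong\,\Gamma$ gives $\pi_1(M)\,\cong\,\Gamma$. Thus $\Gamma$ is the fundamental group of a compact $K$-contact manifold, that is, $\Gamma$ is $K$-contact, which is exactly the assertion.

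The main obstacle is concentrated entirely in the first step, namely the production of a symplectic manifold with prescribed finitely presented fundamental group; this is where Gompf's theorem does all the work. By contrast, the second step is routine: Proposition~\ref{prop:fat} is a purely geometric fibration construction, and the fact that it preserves the fundamental group is forced because the fiber $S^3$ of the associated bundle $M\lrightarrow X$ is simply connected, so $\pi_1(M)\,\cong\,\pi_1(X)$ follows from the homotopy long exact sequence. I would therefore expect the write-up to be very short: cite Gompf's realization theorem to obtain $X$, and then apply Proposition~\ref{prop:fat} to conclude.
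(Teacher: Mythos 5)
Your proposal is correct and follows exactly the paper's own argument: invoke Gompf's realization theorem to produce a closed symplectic manifold $X$ with $\pi_1(X)\cong\Gamma$, then apply Proposition~\ref{prop:fat} to obtain a compact $K$-contact manifold with the same fundamental group. No gaps; this is the intended two-line proof.
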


\begin{proof}
A well-known result of Gompf \cite{Gompf} shows that any finitely presentable group $\Gamma$ can be 
realized as the fundamental group of some closed symplectic manifold of dimension 
$\geq 4$. Therefore, the theorem follows using Proposition \ref{prop:fat}.
\end{proof}

We now recall a theorem of Margulis from \cite{M}.

\begin{theorem}[\cite{M}]\label{thm:margulis}
Let $\mathcal G$ be a connected semisimple Lie group of rank at least
two and with no co-compact factors. Let $\Gamma\,\subset\,
\mathcal G$ be an irreducible arithmetic lattice in $\mathcal G$, and
let $\Sigma$ be a normal subgroup of $\Gamma$. Then either $\Sigma\,\subset\,
Z({\mathcal G})$ (the center of $\mathcal G$), or $\Gamma/\Sigma$ is finite.
\end{theorem}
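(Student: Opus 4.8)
This is Margulis's Normal Subgroup Theorem, and any honest plan must reckon with the fact that its proof is one of the deepest results in the rigidity theory of lattices; I will therefore only outline the strategy rather than attempt a self-contained argument. The plan is to prove the dichotomy in contrapositive form: assuming $\Sigma\,\not\subset\, Z(\mathcal{G})$, I would show that the quotient $\overline{\Gamma}\,=\,\Gamma/\Sigma$ is finite. The first reduction is to observe that a normal subgroup of an irreducible lattice is, by the Borel density theorem (using that $\Gamma$ is Zariski dense in $\mathcal{G}$), either finite or Zariski dense. If $\Sigma$ is finite, then its centralizer in $\Gamma$ has finite index, hence is still Zariski dense, so $\Sigma$ is centralized by all of $\mathcal{G}$ and thus $\Sigma\,\subset\, Z(\mathcal{G})$. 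So it suffices to treat an \emph{infinite} normal $\Sigma$ and prove that $\overline{\Gamma}$ is finite.

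The heart of the argument is a two-sided squeeze on $\overline{\Gamma}$: one shows it is simultaneously \emph{amenable} and \emph{Kazhdan}, and a discrete group with both properties is finite. For the amenability half I would use boundary theory. Let $P\,\subset\,\mathcal{G}$ be a minimal parabolic and $B\,=\,\mathcal{G}/P$ the Furstenberg boundary; the $\Gamma$-action on $B$ equipped with its quasi-invariant measure is amenable because $P$ is amenable. Margulis's Factor Theorem identifies all measurable $\Gamma$-equivariant quotients of $B$ with the standard boundaries $\mathcal{G}/Q$ attached to parabolics $Q\,\supseteq\, P$; feeding the normal subgroup $\Sigma$ into this classification forces the $\Sigma$-invariant factor of $B$ to be either trivial or all of $B$, and tracking the induced $\overline{\Gamma}$-action then produces an invariant mean, so $\overline{\Gamma}$ is amenable. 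This measure-theoretic step, together with the Factor Theorem on which it rests, is the main obstacle: it is precisely the analytic core underlying Margulis superrigidity and cannot be shortcut.

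For the Kazhdan half, when every simple factor of $\mathcal{G}$ has higher rank one simply invokes property (T) of $\mathcal{G}$, which passes to the lattice $\Gamma$ and then to every quotient $\overline{\Gamma}$. The delicate case is when $\mathcal{G}$ contains rank-one factors (for instance $SL_2(\mathbb{R})\times SL_2(\mathbb{R})$), where neither $\mathcal{G}$ nor $\Gamma$ has property (T); here I would replace the naive appeal to (T) by the Bader--Shalom mechanism for irreducible lattices in products, which shows directly that an amenable quotient of such a lattice is finite, using Howe--Moore mixing and a reduced-cohomology vanishing argument in place of a spectral gap. Finally one combines the two halves: an amenable group with property (T) carries both an invariant mean and a spectral gap on $\ell^2$, which is impossible unless the group is finite; hence $\overline{\Gamma}\,=\,\Gamma/\Sigma$ is finite, completing the dichotomy. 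I would flag that exactly these two ingredients — amenability via the Factor Theorem and the substitute for (T) in the presence of rank-one factors — are the parts that the present paper is right to quote from \cite{M} rather than reprove.
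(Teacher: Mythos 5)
The paper does not prove this statement at all: it is Margulis's Normal Subgroup Theorem, imported verbatim from \cite{M} and used as a black box in the proof of Proposition \ref{prop:lattices1}, so there is no in-paper argument to compare yours against. Taken on its own terms, your outline is a faithful sketch of the standard proof: the preliminary reduction (a finite normal subgroup of an irreducible lattice is centralized by a finite-index, hence Zariski-dense, subgroup, so lies in $Z(\mathcal G)$ by Borel density) is correct and complete, and the two-sided squeeze --- $\Gamma/\Sigma$ is amenable via boundary theory and the Factor Theorem, $\Gamma/\Sigma$ has property (T), and an amenable Kazhdan group is finite --- is exactly the architecture of Margulis's argument. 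But you should be clear that what you have written is a roadmap, not a proof: the Factor Theorem and the property (T) statement for quotients in the presence of rank-one factors are the entire content of the theorem, and you defer both. Two small points: Margulis's original paper \cite{M} (1979) handles the case where $\mathcal G$ lacks property (T), e.g.\ $SL_2(\RR)\times SL_2(\RR)$, by proving directly that the quotient $\Gamma/\Sigma$ has property (T) (using normality of $\Sigma$ and an induction-of-representations argument), not via the Bader--Shalom machinery, which postdates \cite{M} by decades; citing Bader--Shalom is a legitimate alternative route but is not what the quoted source does. Also, the hypothesis ``irreducible \emph{arithmetic} lattice'' in the paper's statement is stronger than needed --- the Normal Subgroup Theorem holds for all irreducible lattices in higher-rank semisimple groups --- so arithmeticity plays no role in either your sketch or the actual proof.
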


\begin{proposition}\label{prop:lattices1} Let $\Gamma$ be an irreducible 
arithmetic lattice in a semisimple real Lie group $\mathcal G$ of rank at least two with
no co-compact factors and with trivial center. If $\Gamma$ is Sasakian, then
it must be isomorphic to the group $\pi_1^{orb}(M)$ of some K\"ahler
orbifold. Moreover, $\Gamma$ cannot be a co-compact arithmetic lattice
in $SO(1,n)$, $n>2$, or $F_{4(20)}$, or a simple real non-Hermitian Lie
group of noncompact type with real rank at least $20$.
\end{proposition}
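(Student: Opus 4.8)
The plan is to establish two separate assertions about an irreducible arithmetic lattice $\Gamma$ in $\mathcal{G}$. First I would show that if $\Gamma$ is Sasakian, then it is the orbifold fundamental group of a K\"ahler orbifold. Second, I would use this together with known rigidity and superrigidity results to rule out the specific lattices named in the statement. The bridge between Sasakian geometry and K\"ahler geometry is the key idea: by the result of \cite{OV}, any Sasakian structure can be deformed to a quasi-regular one, and a compact manifold $M$ with a quasi-regular Sasakian structure is the total space of a circle orbi-bundle (Boothby-Wang fibration in the orbifold category) over a K\"ahler orbifold $Z$. The long exact homotopy sequence of the orbifold fibration $S^1 \to M \to Z$ then gives a surjection $\pi_1(M) \to \pi_1^{orb}(Z)$ whose kernel is a quotient of $\pi_1(S^1) = \ZZ$, hence central and cyclic. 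Since $\Gamma \cong \pi_1(M)$ is isomorphic to the arithmetic lattice, I would apply Margulis' normal subgroup theorem (Theorem \ref{thm:margulis}): the kernel, being a normal subgroup, is either central in $\mathcal{G}$ or of finite index. Because $\mathcal{G}$ has trivial center and $\Gamma$ is infinite, the central cyclic kernel must be trivial, yielding $\Gamma \cong \pi_1^{orb}(Z)$ for the K\"ahler orbifold $Z$.

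For the second assertion I would invoke the existing structure theory for K\"ahler orbifold groups, which parallels the theory of K\"ahler groups. The relevant input is that orbifold fundamental groups of compact K\"ahler orbifolds satisfy the same restrictions that K\"ahler groups do, obtained through harmonic map and factorization techniques (the work descending from Carlson-Toledo, Gromov-Schoen, and the superrigidity of lattices in rank-one groups). In particular, co-compact lattices in $SO(1,n)$ for $n > 2$, in the exceptional group $F_{4(20)}$, and more generally in simple non-Hermitian Lie groups of noncompact type of sufficiently high real rank are known not to arise as K\"ahler (orbifold) groups. I would cite these non-K\"ahlerianity results and combine them with the first assertion: if $\Gamma$ were Sasakian it would be K\"ahler-orbifold, contradicting these theorems. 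The rank bound of $20$ in the statement signals that the argument relies on a specific quantitative superrigidity estimate, presumably from the paper referenced as \cite{X} or its sources, for non-Hermitian groups.

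The main obstacle I anticipate is the passage from Sasakian structure to K\"ahler orbifold in a way that genuinely preserves the fundamental group up to the central cyclic ambiguity controlled by Margulis. One must be careful that the Boothby-Wang orbi-bundle really does present $M$ as an $S^1$-orbibundle and that the homotopy exact sequence applies in the orbifold setting, so that the kernel of $\pi_1(M) \to \pi_1^{orb}(Z)$ is cyclic and central. The finiteness alternative in Margulis' theorem must then be excluded, which requires knowing that $\pi_1^{orb}(Z)$ is infinite; this follows because a finite quotient of $\Gamma$ would force $\Gamma$ to be virtually central, contradicting that $\Gamma$ is an infinite lattice in a center-free semisimple group. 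The second, and subtler, difficulty is locating and correctly applying the non-K\"ahlerianity statements for the exact list of groups, especially the uniform treatment of non-Hermitian simple groups of real rank at least $20$, where one needs the appropriate superrigidity theorem guaranteeing that any homomorphism to the isometry group of the relevant symmetric space (or to a K\"ahler group target) is rigid enough to obstruct the K\"ahler orbifold structure.
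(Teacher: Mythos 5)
Your first assertion is handled essentially as in the paper: deform to a quasi-regular Sasakian structure via \cite{OV}, present $X$ as a circle orbi-bundle over a K\"ahler orbifold, run the orbifold homotopy exact sequence to get $\mathbb{Z}\to\Gamma\to\pi_1^{orb}(M)\to 1$, and use Margulis (Theorem \ref{thm:margulis}) to kill the image of $\mathbb{Z}$; your explicit exclusion of the finiteness alternative (a lattice in a higher-rank center-free group cannot be virtually cyclic) is a point the paper leaves implicit, and is correct.

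The gap is in the second assertion. You propose to finish by citing ``non-K\"ahlerianity results for K\"ahler orbifold groups,'' i.e.\ that $\pi_1^{orb}$ of a compact K\"ahler orbifold obeys the same restrictions as an honest K\"ahler group. That is precisely the statement that needs proof, and the paper does not assume it. Instead it reduces to the ordinary K\"ahler-group case by showing $\Gamma\cong\pi_1(M)$ (the fundamental group of the \emph{underlying space}), in three steps you do not supply: (a) the canonical surjection $h:\Gamma\cong\pi_1^{orb}(M)\to\pi_1(M)$; (b) a surjection in the other direction, obtained by taking the Eells--Sampson harmonic map $f:X\to B=\Gamma\backslash\mathcal{G}/K$ inducing an isomorphism on $\pi_1$, invoking Petit's result that $f_*(\xi)=0$ for the Reeb field so that $f$ is constant on Reeb orbits and factors through $M$, whence $g_*:\pi_1(M)\to\Gamma$ is onto; and (c) a second application of Margulis to see that the composite surjection $g_*\circ h:\Gamma\to\Gamma$ has trivial kernel, so $h$ is an isomorphism. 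Then the fundamental group of the space underlying a K\"ahler orbifold is a genuine K\"ahler group (resolve the quotient singularities and use Koll\'ar's theorem that this preserves $\pi_1$), so $\Gamma$ is K\"ahler and the Carlson--Hern\'andez theorem applies directly. Without some such reduction, your appeal to an orbifold version of these obstructions is not a citation but an open step, and your own hedging about ``locating and correctly applying'' those statements signals that the argument is not closed.
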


\begin{proof}
Assume that $\Gamma$ is a Sasakian group. Let $\Gamma\,\cong\,\pi_1(X)$, 
where $X$ is a compact quasi-regular Sasakian manifold. We know that $X$ is
a rational circle
bundle over a compact K\"ahler orbifold $M$. Let 
$$S^1 \,\hookrightarrow \,X \,\stackrel{\pi}{\longrightarrow} \,M$$
be this orbi-bundle. By \cite[Theorem 4.3.18]{BG}, for any orbi-bundle
obtained by an action of a Lie group $G$
$$ 
G\,\lrightarrow\, X\,\lrightarrow\, M\, ,
$$
there is a long homotopy exact sequence
$$
 \cdots\, \lrightarrow\,\pi_i(G)\,\lrightarrow\, \pi_i^{orb}(X)\,\lrightarrow\,
\pi_i^{orb}(M)\,\lrightarrow \,\pi_{i-1}(G)\,\lrightarrow\, \cdots 
$$
Since in our case $G\,=\,S^1$, we have the exact sequence
 $$
 \cdots \,\lrightarrow\,\mathbb{Z}\,\lrightarrow\,\Gamma
\,\lrightarrow\,\Gamma'\,\lrightarrow\, \{1\}\, ,
 $$
where $\Gamma'\,=\,\pi_1^{orb}(M)$. Now by Theorem \ref{thm:margulis}, we have
$\Gamma\,\cong\,\pi_1^{orb}(M)$, because the image of $\mathbb{Z}$ must be in
the center of $\mathcal G$, which is trivial by the
assumption on $\mathcal G$. Since there is also a surjection
$\pi_1^{orb}(M)\,\lrightarrow\, \pi_1(M)$, we get a surjection 
 $$
 h\,:\,\Gamma\,\lrightarrow\,\pi_1(M)\, .
 $$
Consider the locally symmetric Riemannian space $B\,=\,
\Gamma\setminus {\mathcal G}/K$, where $K$ is a maximal compact subgroup of
$\mathcal G$. Then we have
$\Gamma\,\cong\,\pi_1(B)$. Since the sectional curvature of $B$ is non-positive, we
have the following:
\begin{enumerate}
 \item there is a harmonic map $f\,:\, X\,\lrightarrow\, B$ such that
$f_*\,:\,\pi_1(X)\,\lrightarrow\, \pi_1(B)$ 
 is an isomorphism (Eells-Sampson theorem \cite{ES}).
\item $f_*(\xi)\,=\,0$ for the Reeb vector field $\xi$ on $X$ (see \cite{P}).
\end{enumerate}
The above statement (2) shows that $f$ is constant on orbits of the Reeb vector field.
The fibers of $p$ are circles. Therefore, $f$ factors through $M$, meaning,
there exists $g\,:\, M\,\lrightarrow\, B$ such that $g\circ p\,=\,f$.
Therefore, on the level of fundamental groups one obtains $g_*\circ p_*\,=\,
f_*$, which shows that there is also a surjection 
 $$
 g_*\,:\,\pi_1(M)\,\lrightarrow\,\pi_1(B)\,=\,\Gamma\, .
 $$
Hence there is a sequence of two surjections
$$
\CD
\Gamma @>{h}>>\pi_1(M) @>{g_*}>>\Gamma
\endCD\, .
$$

Recall that $\Gamma$ is an arithmetic lattice in a semisimple Lie group
with trivial center. It means that 
$g_*\circ h$ cannot have non-trivial
kernel (again, by Theorem \ref{thm:margulis}). Hence, $h$ cannot have a
non-trivial kernel as well, and, therefore, it must be an isomorphism.
So $\pi_1(M)\,\cong\,\Gamma$.

It is 
known that the fundamental group of the topological space underlying a
K\"ahler orbifold is K\"ahler.
Indeed, a resolution of the singularity of the underlying space
of a K\"ahler orbifold is a compact
K\"ahler manifold. On the other hand, by \cite[p. 203, Theorem (7.5.2)]{Ko},
this resolution of singularity gives an isomorphism of fundamental groups
because the singularities are quotient of a smooth variety by the action
of a finite group. Therefore, the fundamental group of the
topological space underlying a K\"ahler orbifold is K\"ahler.

Thus, $\Gamma$ must be K\"ahler. But the latter is impossible, by a result
of Carlson and Hern\'andez \cite{CH}.
\end{proof}

\section*{Acknowledgements}

We are very grateful to the referee for pointing out an error in an earlier
proof of Proposition \ref{prop:Leschetz-higher-massey} (see Remark \ref{ref-rem}).
We also thank the referee for very useful suggestions.
We would like to thank John Oprea and Francisco Presas for useful comments.
The first author is supported by the J. C. Bose Fellowship. The second 
author was partially supported through Project MICINN (Spain) 
MTM2011-28326-C02-02, and Project of UPV/EHU ref.\ UFI11/52. The third 
author was partially supported by Project MICINN (Spain) MTM2010-17389. 
The fourth author was partly supported by the ESF Research Networking 
Programme CAST (Contact and Symplectic Topology).

\end{document}